\documentclass[11pt]{article}

\usepackage[a4paper,margin=1in]
{geometry}

\usepackage{amsmath, amssymb, amsthm, mathtools}
\usepackage{mathrsfs}

\usepackage{stackrel}

\usepackage{bm}

\usepackage{microtype}

\usepackage{hyperref}

\usepackage{xcolor}

\newtheorem{theorem}{Theorem}[section]

\newtheorem{lemma}[theorem]{Lemma}

\theoremstyle{definition}

\newtheorem{definition}[theorem]{Definition}

\theoremstyle{remark}

\newtheorem{remark}[]{Remark}

\numberwithin{equation}{section}

\newcommand{\intav}[1]{\mathchoice {\mathop{\vrule width 6pt height 3 pt depth  -2.5pt
\kern -8pt \intop}\nolimits_{\kern -6pt#1}} {\mathop{\vrule width
5pt height 3  pt depth -2.6pt \kern -6pt \intop}\nolimits_{#1}}
{\mathop{\vrule width 5pt height 3 pt depth -2.6pt \kern -6pt
\intop}\nolimits_{#1}} {\mathop{\vrule width 5pt height 3 pt depth
-2.6pt \kern -6pt \intop}\nolimits_{#1}}}

\title{Geometric approaches for improved regularity in fully nonlinear parabolic models}
\author{\it by \smallskip \\ Junior da Silva Bessa \footnote{\noindent Universidade Estadual de Campinas - UNICAMP. Instituto de Matem\'{a}tica, Estat\'{i}stica e Computa\c{c}\~{a}o Cient\'{i}fica - IMECC. Departamento  de Matemática. Bar\~{a}o Geraldo, Campinas - SP, Brasil. \noindent \texttt{E-mail address: \url{jbessa@unicamp.br}}},\qquad
Jo\~{a}o Vitor  da Silva
\footnote{\noindent Universidade Estadual de Campinas - UNICAMP Instituto de Matem\'{a}tica, Estat\'{i}stica e Computa\c{c}\~{a}o Cient\'{i}fica - IMECC. Departamento  de Matemática. Bar\~{a}o Geraldo, Campinas - SP, Brasil. \noindent \texttt{E-mail address: \url{jdasilva@unicamp.br}}},\\ \quad $\&$ \\ \quad  Mayra Soares\footnote{\noindent Universidade  de Bras\'{i}lia. Departamento de Matem\'{a}tica. Bras\'{i}lia - DF, Brasil. \noindent \texttt{E-mail address: \url{mayra.soares@unb.br}}}
}
\date{\today}

\begin{document}

\maketitle

\begin{abstract}
\noindent In this paper, we derive improved estimates for a class of fully nonlinear parabolic equations with continuous drift and admissible source terms of the form
\[
\partial_{t}u - F(D^2u,x,t) + \langle B(x,t), Du\rangle = f(x, t, u^{+}, u^{-}) \quad \text{in}\quad Q_1.
\]
Our analysis reveals two distinct regimes. In the first, $f=f(x,t)$ exhibits $\theta$-H\"{o}lder decay ($\theta\in(0,1)$), yielding improved gradient regularity at vanishing points via perturbative methods and geometric iteration, as well as nondegeneracy with explicit growth rates under a suitable structural condition. In the second, $f(\cdot,u^+,u^-)=(u^+)^\gamma-(u^-)^\gamma$ with $\gamma\in(0,1)$ (corresponding to an evolutionary semilinear two-phase model), we obtain enhanced regularity at branching points by combining a robust blow-up analysis with local derivative estimates for linear equations. Our results remain relevant even in linear settings with merely continuous data, linking to classical free boundary problems arising in mathematical physics and related areas.

\medskip
\noindent
\textbf{Keywords:} Fully nonlinear parabolic equations, improved estimates, two-phase free boundary problems, branching points.

\medskip

\noindent \textbf{AMS Subject Classifications:} 35B65, 35R35, 35D40, 35K10.
\end{abstract}

\section{Introduction}


\medskip

Inspired by a series of semilinear evolution models in divergence form arising in applied mathematics and related topics (see for instance the state-of-the art, subsection \ref{State-of-the-art}, for a complete explanation of such motivations), we consider the following non-divergence form problem:
\begin{equation}\label{P}
\partial_{t} u - F(D^2u,x,t) + \langle B(x,t), Du\rangle = f(x,t,u^+,u^{-}) \quad \text{in } Q_1:=B_{1}\times(-1,0].
\end{equation}

Throughout this manuscript, we impose the following structural assumptions:
\begin{itemize}
\item[{\bf ($A_1$)}] ({\bf Uniform Ellipticity})  $F:\mathrm{Sym}(n)\times Q_{1} \to \mathbb{R}$ is a continuous, fully nonlinear and  uniformly elliptic operator with ellipticity constants $0 < \lambda \leq \Lambda$. Precisely, it satisfies
\begin{equation}\label{Unif_parab}
\mathcal{M}^{-}_{\lambda, \Lambda}(\mathrm{Y}) \leq F(\mathrm{X} + \mathrm{Y},x,t) - F(\mathrm{X},x,t) \leq \mathcal{M}^{+}_{\lambda,\Lambda}(\mathrm{Y}),
\end{equation}
for all $(x,t) \in Q_{1}$ and $\mathrm{X}, \mathrm{Y} \in \mathrm{Sym}(n)$ with $\mathrm{Y} \geq 0$. Here $\mathcal{M}_{\lambda,\Lambda}^{\pm}$ denote the Pucci extremal operators:
\[
\mathcal{M}^{+}_{\lambda,\Lambda}(\mathrm{X})=\Lambda \sum_{e_{i}>0}e_{i}+\lambda \sum_{e_{i}<0}e_{i}, \quad
\mathcal{M}^{-}_{\lambda,\Lambda}(\mathrm{X})=\lambda \sum_{e_{i}>0}e_{i}+\Lambda \sum_{e_{i}<0}e_{i},
\]
where $e_{i}(\mathrm{X})$ are the eigenvalues of $\mathrm{X}$. We also normalize $F(\mathrm{O}_n,x,t)=0$.

\item[{\bf ($A_2$)}] ({\bf Regularity of the Data}) The drift term satisfies $B \in C^{0}(Q_{1};\mathbb{R}^{n})\cap L^{\infty}(Q_{1};\mathbb{R}^{n})$.

\item[{\bf ($A_3$)}] ({\bf Parabolic Continuity Condition}) There exists a modulus of continuity $\tau$, such that
\begin{equation}\label{omega}
\frac{|F(\mathrm{M}, x,t)-F(\mathrm{M}, y,s)|}{\|\mathrm{M}\|+1}
\leq
\tau\bigl(d_p((x,t),(y,s))\bigr),
\quad \text{for any } \mathrm{M}\in \mathrm{Sym}(n),
\end{equation}
and $(x,t), (y,s) \in Q_1$, where
\(
d_p\big((x,t),(y,s)\big):=\max\{|x-y|,|t-s|^{\frac{1}{2}}\},
\)
is the parabolic distance.

\item[{\bf ($A_4$)}] ({\bf Regularity of the Operator}) For each $(x,t)\in Q_1$, the mapping $F(\cdot,x,t)$ is of class $C^1(\mathrm{Sym}(n))$. Moreover, there exists a modulus of continuity $\omega$ such that
\[
|\mathrm{D}_{\mathrm{M}}F(\mathrm{X},x,t) - \mathrm{D}_{\mathrm{M}}F(\mathrm{Y},x,t)| \leq \omega(|\mathrm{X}-\mathrm{Y}|),
\]
for all $(x,t)\in Q_1$ and $\mathrm{X},\mathrm{Y}\in \mathrm{Sym}(n)$, where
\[
\mathrm{D}_{\mathrm{M}}F(\mathrm{X}_0,x,t) := \lim_{s \to 0} \frac{F(\mathrm{X}_0+s\mathrm{M},x,t)-F(\mathrm{X}_0,x,t)}{s}
\]
denotes the G{a}teaux derivative with respect to the matrix variable.

\end{itemize}

\noindent
In this first part of this work, we investigate the regularity of viscosity solutions to problem \eqref{P} when the source term $f$ is independent of $u$. Under assumption ($A_1$), together with continuity of $B$ and $f$, classical results (see~\cite{Wang92}) ensure that solutions to
\begin{equation}\label{problemwithoutdepofu}
\partial_{t}u - F(D^2u,x,t) + \langle B(x,t), Du\rangle = f(x,t) \quad \text{in } Q_1,
\end{equation}
belong to $C^{1+\alpha,\frac{1+\alpha}{2}}_{\mathrm{loc}}(Q_{1})$ for some $0<\alpha\leq\alpha_{\mathrm{F}}$, where $\alpha_{\mathrm{F}}\in (0,1]$ is the optimal H\"{o}lder exponent associated with the constant-coefficient homogeneous equation.

\medskip

\noindent
We emphasize that improved regularity can be obtained at points where the source term exhibits enhanced regularity. In particular, we assume that $f$ is $\theta$-H\"{o}lder continuous at the origin, namely,
\begin{equation}\label{condhol}
|f(x,t)|\leq \mathrm{C}_{f} d_{p}((x,t),(0,0))^{\theta}.
\end{equation}

\medskip

\noindent
For future references, for each $(x_0,t_0)\in Q_1$, we define the oscillation function
\[
\Phi_{F}((x,t),(x_0,t_0)) := \sup_{\mathrm{X} \in \mathrm{Sym}(n)} \frac{|F(\mathrm{X},x,t)-F(\mathrm{X},x_0,t_0)|}{\|\mathrm{X}\|+1},
\]
which measures the local variation of the coefficients of $F$ near $(x_0,t_0)$ (cf.~\cite{CKS2000}). When $(x_0,t_0)=(0,0)$, we simply write $\Phi_F(x,t)$.

Now, we are in a position to address our first main result, a sort of improved estimates along vanishing points of source term.

\begin{theorem}\label{Thm1.1}
Let $u$ be a bounded viscosity solution to \eqref{problemwithoutdepofu}, assume the structural conditions $(A_1)-(A_2)$, and that $f$ satisfies \eqref{condhol} for some $0<\theta\leq 1$. Then, there exist positive constants $\eta_{0}>0$ and $r_{0}\ll1$, depending only on $
n,\lambda,\Lambda,\theta,\alpha_{\mathrm{F}},\|B\|_{L^\infty(Q_1;\mathbb{R}^{n})} $ and $
\mathrm{C}_f,
$ such that
\[
\left(\intav{Q_{r}}\Phi_{F}(x,t)^{n+1}\,dxdt\right)^{\frac{1}{n+1}}\leq\eta_{0}, \ \text{for all }\quad 0<r\leq r_{0}.\]
Consequently, $u$ is  $C^{1+\alpha^{\sharp},\frac{1+\alpha^{\sharp}}{2}}$  at the origin, for  all 
\(
\alpha^{\sharp} \in (0, \alpha_{\mathrm{F}}) \cap(0,\alpha+\theta].
\)
More precisely, there exists a constant $\mathrm{C}>0$ such that
\begin{equation*}
\sup_{(x,t)\in Q_{r}}|u(x,t)-u(0,0)-Du(0,0)\cdot x|\leq \mathrm{C}r^{1+\alpha^{\sharp}},\, \text{for any}\quad 0<r\leq r_{0}.
\end{equation*}
\end{theorem}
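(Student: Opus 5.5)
The argument is the standard perturbative scheme of Caffarelli--Kovats--Svitek type, run at the origin with the scaling adapted to the $\theta$-decay of $f$. The statement should be read as follows: if the averaged oscillation $\Phi_F$ is at most $\eta_0$ on all small cylinders, then $u$ is $C^{1+\alpha^\sharp}$ at the origin.

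\textbf{Normalization.} By the scaling $v(x,t)=u(\rho x,\rho^2 t)/K$, we may assume the following:
\begin{itemize}
\item $\|u\|_{L^\infty(Q_1)}\le 1$;
\item $\mathrm{C}_f\le \varepsilon$;
\item $\|B\|_{L^\infty}\le \varepsilon$ and $\Phi_F$ has small $L^{n+1}$ average on every $Q_r$, $r\le1$, with $\varepsilon$ to be fixed later.
\end{itemize}
The scaled drift is $\rho B(\rho x,\rho^2t)$, so it shrinks. The scaled operator is $F_\rho(M,x,t)=\rho^2F(\rho^{-2}KM,\rho x,\rho^2 t)/K$, which preserves ellipticity and the $\Phi$-average hypothesis on dyadic cylinders.

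\textbf{Approximation lemma.} Given $\delta>0$, there is $\varepsilon>0$ with the following property. If $u$ solves the normalized equation with $|f|\le\varepsilon\, d_p^\theta$, $\|B\|_\infty\le\varepsilon$ and $\|\Phi_F\|_{L^{n+1}(Q_1)}\le\varepsilon$, then there is a solution $h$ of $\partial_t h-F(D^2h,0,0)=0$ in $Q_{1/2}$ with $\|u-h\|_{L^\infty(Q_{1/2})}\le\delta$.

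The proof is by contradiction and compactness. One uses uniform $C^{\beta}$ estimates (Krylov--Safonov/Wang) to extract a convergent subsequence, and stability of viscosity solutions under $L^{n+1}$ convergence of the coefficients (Caffarelli--Crandall--Kocan--Święch). The limit solves the frozen, drift-free homogeneous equation. The key fact is that $h\in C^{1+\alpha_F}$ with universal bounds, so for any $\alpha^\sharp<\alpha_F$ the affine part $\ell(x)=h(0,0)+Dh(0,0)\cdot x$ satisfies $\sup_{Q_r}|h-\ell|\le Cr^{1+\alpha_F}$.

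\textbf{Geometric iteration.} Choose $r_1$ small so that $Cr_1^{1+\alpha_F}\le \tfrac12 r_1^{1+\alpha^\sharp}$. Then choose $\delta=\tfrac12 r_1^{1+\alpha^\sharp}$, and let $\varepsilon$ be the corresponding constant. This gives the first step $\sup_{Q_{r_1}}|u-\ell_1|\le r_1^{1+\alpha^\sharp}$.

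Inductively, we construct affine functions $\ell_k=a_k+b_k\cdot x$ with
\[
\sup_{Q_{r_1^k}}|u-\ell_k|\le r_1^{k(1+\alpha^\sharp)},\qquad |a_{k+1}-a_k|+r_1^k|b_{k+1}-b_k|\le C r_1^{k(1+\alpha^\sharp)}.
\]
The inductive step applies the approximation lemma to
\[
w_k(x,t)=\frac{(u-\ell_k)(r_1^kx,r_1^{2k}t)}{r_1^{k(1+\alpha^\sharp)}}.
\]
This function solves an equation with operator $F_k(M,x,t)=r_1^{k(1-\alpha^\sharp)}F(r_1^{-k(1-\alpha^\sharp)}M,\cdot)$, which has the same ellipticity and the same $\Phi$-average smallness by the hypothesis on $Q_r$ for $r\le r_0$. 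The drift is $r_1^kB$. Since the affine part $\ell_k$ does not see $D^2$, it contributes only the lower order term $r_1^{k(1-\alpha^\sharp)}\langle B,b_k\rangle$, which is added to the right-hand side. The source term becomes
\[
r_1^{k(1-\alpha^\sharp)}f(r_1^k x,r_1^{2k}t),
\]
bounded by $\varepsilon r_1^{k(1-\alpha^\sharp+\theta)}$. This is where $\alpha^\sharp\le \alpha+\theta$ (read as $\alpha^\sharp\le 1+\theta$ in the gradient scaling) keeps the source under control.

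\textbf{Main obstacle.} The hardest step is the drift-times-gradient term $r_1^{k(1-\alpha^\sharp)}\langle B,b_k\rangle$. Since $B$ is only bounded, it does not vanish at the origin. The slopes $b_k$ are uniformly bounded because $\sum_k r_1^{k\alpha^\sharp}<\infty$, so this term is $O(r_1^{k(1-\alpha^\sharp)})\|B\|_\infty$. That is acceptable only because $\alpha^\sharp<1$ and $\|B\|_\infty$ was made small by the initial scaling; this, together with $\alpha^\sharp<\alpha_F\le 1$, explains the constraint set. To handle it, I would include this term in the source during the approximation lemma, bounding it uniformly by $\varepsilon$ rather than requiring decay.

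\textbf{Conclusion.} The sequences $a_k,b_k$ converge to $u(0,0)$ and $Du(0,0)$. The standard interpolation between consecutive radii $r_1^{k+1}<r\le r_1^k$ then gives
\[
\sup_{Q_r}|u-u(0,0)-Du(0,0)\cdot x|\le Cr^{1+\alpha^\sharp}.
\]
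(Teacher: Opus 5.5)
Your proposal is correct and follows essentially the same route as the paper. Both arguments normalize by scaling, approximate by a solution of the frozen-coefficient equation $\partial_t h-F(D^2h,0,0)=0$ via compactness, and iterate on $w_k=(u-\ell_k)(r_1^kx,r_1^{2k}t)/r_1^{k(1+\alpha^\sharp)}$, with the same rescaled operator $F_k$, drift $r_1^kB$ and source $r_1^{k(1-\alpha^\sharp)}f$ as in the paper.

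The only difference is cosmetic: the paper absorbs $r_1^{k(1-\alpha^\sharp)}\langle B,b_k\rangle$ by building a translation vector $\vec{\xi}_k=r_1^{-k\alpha^\sharp}b_k$ into the approximation lemma, requiring $\|B\|_{L^\infty}(1+|\vec{\xi}|)$ to be small, whereas you put this term into the source with only an $L^\infty$ bound, which is equivalent. In either form you must make $\|B\|_{L^\infty}(1+\sup_k|b_k|)$ small, not just $\|B\|_{L^\infty}$; this is harmless because $\sup_k|b_k|$ is bounded by a universal constant.
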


\begin{remark}
By a translation argument, if $f$ is $\theta$-Hölder continuous, then the regularity improvement result given by Theorem \ref{Thm1.1} holds at every point $(x_{0},t_{0})\in f^{-1}(\{0\})\cap Q_{1/2}$. More precisely, there exist constants $\eta_{0}>0$ and $r_{0}\ll 1$ such that
\[
\sup_{0<r\leq r_{0}}\sup_{(y,s)\in Q_{1/2}}\left(\intav{Q_{r}}\Phi_{F}((x,t),(y,s))^{n+1}\,dxdt\right)^{\frac{1}{n+1}}\leq\eta_{0}.
\]
Consequently, $u$ is a $C^{1+\alpha^{\sharp},\frac{1+\alpha^{\sharp}}{2}}$ function at the point $(x_0, t_0)$, and
\[
\sup_{(x,t)\in Q_{r}(x_{0},t_{0})}|u(x,t)-u(x_{0},t_{0})-Du(x_{0},t_{0})(x-x_{0})|\leq \mathrm{C}r^{1+\alpha^{\sharp}}, \qquad
0<r\leq r_0,
\]
where $\mathrm{C}>0$ depends only on $n$, $\lambda$, $\Lambda$, $\theta$, $\alpha_{\mathrm{F}}$, $\|B\|_{L^{\infty}(Q_{1};\mathbb{R}^{n})}$ and $[f]_{\theta,f^{-1}(\{0\})\cap Q_{1/2}}$.
\end{remark}

We must recall that pointwise regularity estimates have significant applications in other areas such as free boundary problems  \cite{Lind-Monn2013}, \cite{Lind-Monn2015},  and, \cite[Chapter 7]{PetSHaUral12} (for instance, in the study of the structure of singular sets in obstacle-type problems), and in the analysis of the structure of nodal sets of certain uniformly elliptic PDEs, see Han's work \cite[Theorem 5.1]{Han2000} for an insightful result (see also \cite[Theorem~1.26]{Lian2024} for the parabolic analogue). 

\medskip

For our second main result, we assume that $f$ has a decreasing behavior of  order $\theta$. More precisely, there exists a constant $\mathrm{c}_{f}>0$ such that $f$ satisfies
\begin{equation}\label{coninfdhol}
f(x,t)\leq -\mathrm{c}_{f}d_{p}((x,t),(0,0))^{\theta},\,\, \mbox{ for all } \ (x,t)\in Q_{1}.   
\end{equation}
In this scenario, we prove that, under the non-degeneracy assumption of order $\theta$ on the source term stated in \eqref{coninfdhol}, solutions to \eqref{problemwithoutdepofu} inherit a corresponding non-degeneracy property at extremal points, with order $2+\theta$. This fact is summarized by the following result.

\begin{theorem}[\bf Non-degeneracy]\label{Thm1.2}
Let $u\in C^{0}(Q_{1})$ be a viscosity solution to problem \eqref{problemwithoutdepofu}. Assume the structural conditions $(A_1)-(A_2)$ and that $f$ satisfies \eqref{coninfdhol}. Then, for every local extremal point $(x_{0},t_{0})\in Q_{1}$ and every $r>0$ such that $Q_{r}(x_{0},t_{0})\subset\subset Q_{1}$, it follows that  
\[
\sup_{(x,t)\in\partial_{p}Q_{r}(x_{0},t_{0})}|u(x,t)-u(x_{0},t_{0})|\geq \mathrm{C}r^{2+\theta},
\]
for a positive constant $\mathrm{C}$ that depends only on $n$, $\Lambda$, $\|B\|_{L^{\infty}(Q_{1};\mathbb{R}^{n})}$, $\theta$ and $\mathrm{c}_{f}$.
\end{theorem}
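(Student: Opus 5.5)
The plan is a comparison argument with an explicit barrier, in the spirit of nondegeneracy proofs for dead-core and obstacle-type problems. The first step is to identify which extremal points can occur at all. Since $f\le -\mathrm{c}_f d_p^\theta\le 0$, the function $u$ satisfies, in the viscosity sense,
\[
\partial_t u-\mathcal{M}^{+}_{\lambda,\Lambda}(D^2u)-\|B\|_{L^\infty}|Du|\le \partial_t u - F(D^2u,x,t)+\langle B,Du\rangle = f\le 0 .
\]
At a strict interior local maximum, testing with a paraboloid would give a nonnegative left-hand side. So the relevant case is a local minimum $(x_0,t_0)$; a local maximum is either handled by the same barrier with reversed signs or excluded. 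I will write $u_0:=u(x_0,t_0)$.

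Next, I build the barrier. For a constant $\kappa>0$ to be fixed, set
\[
\psi(x,t):=\kappa\Big(|x-x_0|^{2+\theta}+(t_0-t)^{\frac{2+\theta}{2}}\Big),
\]
which is defined on $Q_r(x_0,t_0)$ (where $t\le t_0$) and is $C^{1,1}$ in $x$ and $C^1$ in $t$. A direct computation gives
\[
\partial_t\psi-\mathcal{M}^{-}(D^2\psi)+\|B\|_\infty|D\psi|\ \ge\ -\kappa\,\mathrm{C}(n,\Lambda,\theta,\|B\|_\infty)\,\rho^{\theta},\qquad \rho:=d_p\big((x,t),(x_0,t_0)\big).
\]
Only the upper bound on $|D^2\psi|$ matters here, since we only need $\Lambda$ and not $\lambda$, consistent with the statement. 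Define $w:=u-u_0-\psi$, so that $w(x_0,t_0)=0$. Using $(A_1)$, $w$ satisfies
\[
\partial_t w-\mathcal{M}^{-}(D^2w)+\langle B,Dw\rangle\ \ge\ f+\kappa\mathrm{C}\rho^\theta\quad\text{or the analogous reversed inequality.}
\]
The sign convention is chosen so that $w$ becomes a strict subsolution as soon as $\kappa\mathrm{C}\rho^\theta<\mathrm{c}_f d_p((x,t),(0,0))^\theta$. Given that, the parabolic maximum principle for strict subsolutions forces $\sup_{Q_r(x_0,t_0)} w$ to be attained on $\partial_pQ_r(x_0,t_0)$. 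Because $w(x_0,t_0)=0$, some point of $\partial_pQ_r$ satisfies $u-u_0\ge \psi\ge \kappa r^{2+\theta}$. Indeed, $\psi\ge \kappa r^{2+\theta}$ on the lateral boundary, and $\psi\ge\kappa (r^2)^{(2+\theta)/2}$ on the bottom. This yields $\sup_{\partial_p Q_r}|u-u_0|\ge \kappa r^{2+\theta}$ with $\mathrm{C}=\kappa$. The case of a local minimum uses $w=u_0-u-\psi$, and extremality is used only to guarantee $w\le 0$ near $(x_0,t_0)$, which produces the contradiction if the boundary values are too small.

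The main obstacle is the mismatch between the centre of the barrier, $(x_0,t_0)$, and the point where the hypothesis \eqref{coninfdhol} degenerates, namely the origin. The inequality $\kappa\mathrm{C}\rho^\theta\le \mathrm{c}_f d_p((x,t),0)^\theta$ is immediate when $(x_0,t_0)=(0,0)$ with $\kappa=\mathrm{c}_f/\mathrm{C}$. In general it can fail near the origin if the origin lies inside $Q_r(x_0,t_0)$. I would first try to fix this by centering the singular part of the barrier at the origin, that is, by replacing $\rho$ with $d_p((x,t),0)$ in $\psi$ and then correcting with an affine-in-$x$ term so that the barrier still vanishes at $(x_0,t_0)$. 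If that fails, I would split into two cases. When $d_p((x_0,t_0),0)\ge 2r$, one has $d_p(\cdot,0)\ge r$ on $Q_r(x_0,t_0)$, so a quadratic barrier $\kappa\, r^\theta(|x-x_0|^2+(t_0-t))$ works. When $d_p((x_0,t_0),0)<2r$, the cylinder $Q_r(x_0,t_0)$ contains a subcylinder of size comparable to $r$ on which $d_p(\cdot,0)\gtrsim r$, and I would apply the barrier there, at the cost of a dimensional constant.
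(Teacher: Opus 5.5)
Your core argument (the barrier $\psi=\kappa\big(|x-x_0|^{2+\theta}+(t_0-t)^{\frac{2+\theta}{2}}\big)$, the function $u-u_0-\psi$, comparison, evaluation at the vertex) is exactly the paper's proof. The paper declares $(x_0,t_0)=(0,0)$ ``without loss of generality'' and compares $u-u(0,0)+\varepsilon$ with this barrier through Theorem~\ref{CompPrinc}, so no strict subsolution is needed. Strictness does fail at the vertex when $(x_0,t_0)=(0,0)$, so you should replace your ``maximum principle for strict subsolutions'' by this device.

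You are right that the translation is not innocent, because \eqref{coninfdhol} is anchored at the origin. So the paper, like your main argument, really treats only $(x_0,t_0)=(0,0)$, and your patch for the other points does not close the gap. Your Case A is fine, but the other two ideas fail:
\begin{itemize}
\item \emph{Affine correction.} A correction $\mathfrak{b}\cdot x$ adds $\langle B,\mathfrak{b}\rangle$ to the operator. For the tangent-plane choice $\mathfrak{b}=(2+\theta)\kappa|x_0|^{\theta}x_0\neq 0$, at the origin $\partial_t\psi$ and $D^2\psi$ vanish while $\langle B,D\psi\rangle$ can be as negative as $-(2+\theta)\kappa\|B\|_{L^\infty}|x_0|^{1+\theta}$. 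Since $f(0,0)$ may equal $0$, the supersolution inequality fails unless $B\equiv 0$.
\item \emph{Case B.} A barrier on a subcylinder $Q_\rho(y,s)$ only controls $u-u(y,s)$. To control $u-u_0$ the subcylinder must be $Q_\rho(x_0,t_0)$. For, e.g., $(x_0,t_0)=(\delta e_1,0)$ with $\delta\ll r$, every such cylinder with $\rho>\delta$ contains the origin itself, so you get at best order $\delta^{2+\theta}$, not $r^{2+\theta}$.
\end{itemize}

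The missing observation is that the barrier need not vanish at $(x_0,t_0)$. Constants do not change the equation, so it suffices that $\min_{\partial_pQ_r(x_0,t_0)}\psi-\psi(x_0,t_0)\gtrsim r^{2+\theta}$. Split into two cases.
\begin{itemize}
\item \emph{Case $|x_0|\le r/4$.} Use the origin-centered $V=\kappa\big(|x|^{2+\theta}+(-t)^{\frac{2+\theta}{2}}\big)$, which satisfies $\partial_tV-F(D^2V,x,t)+\langle B,DV\rangle\ge-\mathrm{c}_f\, d_p((x,t),(0,0))^\theta\ge f$ in all of $Q_1$. Since $t\le t_0\le 0$ on the cylinder and $s\mapsto s^{\frac{2+\theta}{2}}$ is superadditive, $V-V(x_0,t_0)\ge\kappa\big(r^{2+\theta}-|x_0|^{2+\theta}\big)$ on the bottom. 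On the lateral boundary, $V-V(x_0,t_0)\ge\kappa\big((3r/4)^{2+\theta}-(r/4)^{2+\theta}\big)$. Comparing $u-u_0+\varepsilon$ with $V-V(x_0,t_0)$ as in the paper concludes.
\item \emph{Case $|x_0|>r/4$.} On $Q_{r/8}(x_0,t_0)$ one has $d_p((x,t),(x_0,t_0))<r/8<|x|$, so your centered barrier is a supersolution there. This gives $\sup_{\partial_pQ_{r/8}(x_0,t_0)}(u-u_0)\ge\kappa(r/8)^{2+\theta}$. To pass to $\partial_pQ_r(x_0,t_0)$, compare $u$ with the constant $u_0+\sup_{\partial_pQ_r(x_0,t_0)}(u-u_0)$, which is a supersolution because $f\le 0$.
\end{itemize}
All constants depend only on $n,\Lambda,\|B\|_{L^\infty},\theta,\mathrm{c}_f$, and extremality is never used.

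Two sign slips also need fixing. First, what you need is a lower bound for $\partial_t\psi-\mathcal{M}^{+}_{\lambda,\Lambda}(D^2\psi)-\|B\|_{L^\infty}|D\psi|$, not the version with $\mathcal{M}^-$ and $+\|B\|_{L^\infty}|D\psi|$. That bound is what makes $u-u_0-\psi$ a subsolution of $\partial_tw-\mathcal{M}^{+}_{\lambda,\Lambda}(D^2w)+\langle B,Dw\rangle\le 0$. Second, the variant $u_0-u-\psi$ ``for minima'' has the wrong sign (since $-f\ge 0$) and should be dropped; $u-u_0-\psi$ already works at every point.
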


From now on, we are interested in analyzing the regularity of viscosity solutions to an evolutionary semilinear two-phase model. We obtain enhanced regularity estimates at branching points by combining a robust blow-up analysis with local derivative estimates for linear equations. 

For this purpose, let us consider, for  $0< \gamma <1$,

\begin{equation}\label{problem2}
F(D^2u,x,t) - \partial_{t}u + \langle B(x,t), Du\rangle = (u^+)^\gamma - (u^-)^\gamma \quad \text{in } \quad Q_1,
\end{equation}
and define the set of branching points (or crossing points) as 
\[
\Gamma(u) := \{\partial_{t}u = |Du| = |D^2u| = 0 \} \cap \partial\{u>0\} \cap \partial\{u<0\}.
\]
Furthermore, we will assume that the modulus of continuity given in $(A_3)$ is $\tau(t): = t^\alpha$, for some $\alpha\in (0,1)$, and fix a constant $\mathrm{M}_0>0$  such that $\max\{\|u\|_{L^\infty(Q_1)}, [\Phi_F]_{\alpha,Q_1}, \|B\|_{L^{\infty}(Q_{1};\mathbb{R}^{n})} \leq \mathrm{M}_0$, where \(Q_1 := B_1 \times (-1,0]\), and \([\Phi_F]_{\alpha,Q_1}\) denotes the \(\alpha\)-Hölder seminorm of the oscillation of the coefficients of \(F\). Then, we are able to introduce the class of functions
\[
\mathcal{G} := \left\{
u \in C^{0}(Q_1) \; \mbox{solving   (\ref{P}) \ such that } \ \|u\|_{C^{2+\alpha_0,\frac{2+\alpha_0}{2}}(Q_{1/2})}\leq \mathrm{C}_0(\mathrm{M}_0,\text{data})
, \ \alpha_0 \in (0,1)\right\},
\]
and consider suitable conditions, to guarantee that this class $\mathcal{G}$, is not empty. For instance, it can be seen if $F$ is a convex/concave operator, in the seminal work by Wang \cite{Wang90}. 
On the other hand, under our assumptions, the operator $F$ is not necessarily convex or concave, and the equation contains a lower-order term. Consequently, in order to guarantee that $\mathcal{G}\neq \emptyset$ within our framework, one must first establish the $C^{1+\alpha_0, \frac{1+\alpha_0}{2}}$-regularity estimates provided by \cite[Theorem 1.3]{Wang92}, and subsequently apply the $C^{2+\alpha_0, \frac{2+\alpha_0}{2}}$-regularity estimates from  \cite[Theorem 1]{SP}, considering the equation in the form
\[ 
F(D^2u,x,t)-\partial_{t}u= \tilde f(u,Du,x,t)\;\text{ in } \ Q_1,
\]
where 
$\tilde f(u, Du, x,t): = - \langle B(x,t), Du\rangle + (u^+)^\gamma - (u^-)^\gamma $ is H\"older continuous. 
\medskip

We now state our third main result, which provides improved estimates at branching points.

\begin{theorem}[\bf Improved estimates at branching points]\label{Thm1.3}
Let $u\in\mathcal{G}$ be a viscosity solution of problem \eqref{problem2}, and let $(x_{0},t_{0})\in \Gamma(u)\cap Q_{1/2}$.
Assume that the structural hypotheses $(A_1)-(A_4)$ hold and $B\in C^{\alpha_0, \frac{\alpha_0}{2}}(\overline{Q}_{1};\mathbb{R}^{n})$.
Then, there exists $\mathrm{C}_0>0$ such that for any
\begin{equation}\label{beta}
0<2+\alpha_{0}<\beta\leq \frac{2}{1-\gamma},
\end{equation}
depending only on $n$, $\lambda$, $\Lambda$, $\gamma$, $\|B\|_{L^{\infty}(Q_{1};\mathbb{R}^{n})}$, and $\mathrm{M}_0$,
such that
\[
\sup_{Q_{r}(x_{0},t_{0})}|u|
\leq \mathrm{C}_0\, r^{\beta}
\quad \text{for every } r\in(0,1/2).
\]
\end{theorem}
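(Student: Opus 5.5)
We argue by contradiction, using a discrete scaling iteration together with a blow-up. Translating, we take $(x_0,t_0)=(0,0)\in\Gamma(u)$, so $u(0,0)=0$, $Du(0,0)=0$, $D^2u(0,0)=0$ and $\partial_t u(0,0)=0$. For $u\in\mathcal{G}$ we set
\[
S(k,u):=\sup_{Q_{2^{-k}}}|u|.
\]
The first goal is the discrete estimate
\[
S(k+1,u)\leq \max\left\{\mathrm{C}_0 2^{-\beta(k+1)},\ 2^{-\beta}S(k,u),\ 2^{-2\beta}S(k-1,u),\dots\right\}.
\]
We only use the simplified form $S(k+1)\le \max\{\mathrm{C}_02^{-\beta(k+1)},2^{-\beta}S(k)\}$, as in the classical branching-point arguments of Uraltseva/Shahgholian type. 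This inequality iterates immediately to $\sup_{Q_r}|u|\le \mathrm{C}_0 r^{\beta}$ for every $r\in(0,1/2)$.

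Suppose the estimate fails. Then there are $u_j\in\mathcal{G}$, operators $F_j$, drifts $B_j$ and integers $k_j$ such that $(0,0)\in\Gamma(u_j)$, $S(k_j+1,u_j)> j\,2^{-\beta(k_j+1)}$ and $S(k_j+1,u_j)>2^{-\beta}S(k_j,u_j)$. Since $\|u_j\|_\infty\le \mathrm{M}_0$, we get $k_j\to\infty$. We rescale by
\[
v_j(x,t):=\frac{u_j(2^{-k_j}x,4^{-k_j}t)}{S(k_j+1,u_j)}\quad\text{in } Q_1 .
\]
By construction $\sup_{Q_{1/2}}|v_j|=1$ and $\sup_{Q_1}|v_j|\le 2^{\beta}$. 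Moreover $v_j$ solves
\[
F_j^{*}(D^2v_j,x,t)-\partial_tv_j+2^{-k_j}\langle B_j(2^{-k_j}x,4^{-k_j}t),Dv_j\rangle=\frac{4^{-k_j}}{S_j^{1-\gamma}}\big((v_j^+)^\gamma-(v_j^-)^\gamma\big),
\]
where $F_j^*(M,x,t)=4^{-k_j}S_j^{-1}F_j(4^{k_j}S_jM,\cdot)$ is again $(\lambda,\Lambda)$-elliptic. The restriction $\beta\le 2/(1-\gamma)$ is used exactly here. Since $S_j>j2^{-\beta(k_j+1)}$, the coefficient on the right satisfies
\[
4^{-k_j}S_j^{\gamma-1}\le 4^{-k_j}\,(j2^{-\beta k_j})^{\gamma-1}\lesssim j^{\gamma-1}2^{k_j(\beta(1-\gamma)-2)}\le j^{\gamma-1}\to0 .
\]
The drift term is $O(2^{-k_j})$, and the coefficient oscillation shrinks by $(A_3)$ with $\tau(t)=t^\alpha$.

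Next we use compactness. Uniform $C^{1+\alpha,\frac{1+\alpha}{2}}$ estimates, obtained from \cite{Wang92} as in Theorem \ref{Thm1.1}, give a subsequence with $v_j\to v$ locally uniformly, together with the gradients. Hypothesis $(A_4)$ and the recession/linearization of $F_j^*$ near $D^2 u_j(0,0)=0$ give a limit operator. Concretely, since $D^2u_j$ is small near the origin by the $\mathcal{G}$ bound, $F_j^*(M)\approx D_MF_j(0,0,0)\cdot M$. Hence $v$ solves a constant-coefficient linear parabolic equation $a_{ij}\partial_{ij}v-\partial_t v=0$ in $Q_1$, with $\sup_{Q_{1/2}}|v|=1$, $\sup_{Q_1}|v|\le 2^\beta$, and by the doubling bound $\sup_{Q_{2^{-\ell}}}|v|\le 2^{\beta(\ell-1)}$ type control. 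We also need the vanishing of the derivatives at the origin to pass to the limit. Here the $C^{2+\alpha_0}$ bound of $\mathcal{G}$ matters: it gives $|u_j|\le \mathrm{C}\,d_p^{2+\alpha_0}$, so $\sup_{Q_{2^{-k}}}|u_j|\le \mathrm{C}2^{-k(2+\alpha_0)}$. This is compared with the lower bound $S_j>j2^{-\beta(k_j+1)}$, which requires $\beta>2+\alpha_0$ to be consistent. The $C^{2+\alpha_0}$ estimate on $v_j$ then yields $v(0,0)=Dv(0,0)=D^2v(0,0)=\partial_tv(0,0)=0$.

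The contradiction comes from the structure of caloric polynomials. Local derivative estimates for the linear constant-coefficient equation make $v$ analytic in $x$ and smooth in $t$, with $|v|\le \mathrm{C}\,d_p^{\beta}$ near $0$ obtained by iterating the doubling inequality. In that case all parabolic-homogeneous Taylor terms of degree $<\beta$ vanish, while degree $\ge3$ terms and the normalization $\sup_{Q_{1/2}}|v|=1$ must be reconciled. We expect this step to be the main obstacle: making the blow-up limit inherit a growth bound incompatible with being nontrivial. What we would try first is to run the contradiction at a scale $\beta$ that is not an integer in the parabolic sense, so no caloric homogeneous polynomial of degree $\beta$ exists. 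We would then argue with a Liouville-type classification of entire caloric functions of growth $|x|^\beta+|t|^{\beta/2}$. To get a global solution we would instead choose $k_j$ maximal (the classical argument), so that $S(k,u_j)\le j2^{-\beta k}$ for $k\le k_j$. This forces $v=0$ once the lower-order Taylor coefficients vanish, contradicting $\sup_{Q_{1/2}}|v|=1$.
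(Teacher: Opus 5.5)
\textbf{Main gap: the final Liouville step.} This is the step you flag as the main obstacle, and the fix you propose cannot work. You want to trap the limit $v$ between $|v|\lesssim d_p^{\beta}$ near the origin and $\sup_{Q_R}|v|\lesssim R^{\beta}$ at infinity, so that a non-integer $\beta$ forces $v\equiv 0$. The small-scale bound is not available. Taking $k_j+1$ to be the first dyadic index at which $S(k,u_j)\le j2^{-\beta k}$ fails controls $v_j$ only on $Q_R$ with $R\ge 1$. The scales of $u_j$ below $2^{-k_j-1}$ are unconstrained; controlling them is exactly the estimate you are trying to prove.

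What the limit actually carries is growth $R^{\beta}$ at infinity plus the vanishing of $v$, $Dv$, $D^2v$, $\partial_t v$ at $(0,0)$ coming from $\Gamma(u)$. A correct Liouville argument, using interior estimates for derivatives of order larger than $\beta$, then only says that $v$ is a caloric polynomial of parabolic degree at most $\lfloor\beta\rfloor$ with no terms of degree $\le 2$. This forces $v\equiv 0$ only when $\beta<3$. For $\beta\ge 3$, the function $v=x_1^3+6x_1t$ (for $\Delta-\partial_t$, and analogously for $\mathrm{tr}(A_0D^2\cdot)-\partial_t$) satisfies every property your limit has, and non-integrality of $\beta$ plays no role.

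This obstruction is genuine, not technical. For $\gamma>1/3$ one can build stationary solutions $u=P+w$ of $\Delta u=(u^+)^\gamma-(u^-)^\gamma$, with $P$ a harmonic cubic and $|w|\lesssim |x|^{2+3\gamma}$. These come from a Schauder fixed point for the Newtonian potential, using $2+3\gamma>3$. Then $(0,0)\in\Gamma(u)$ while $\sup_{Q_r}|u|\sim r^3$, so no argument can give the bound for $\beta\in(3,\frac{2}{1-\gamma}]$. The paper's proof does not overcome this either. It deduces $Dw_0=D^2w_0=\partial_t w_0=0$ from first- and second-order interior estimates, but with $\|w_0\|_{L^\infty(Q_r)}\le r^{\beta}$ those estimates give $Cr^{\beta-1}$ and $Cr^{\beta-2}$, which do not tend to $0$ as $r\to\infty$.

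\textbf{Second gap: the vanishing at the origin is not inherited.} This already matters for $\beta<3$: with sup-norm normalization, the second-order vanishing at the origin does not pass to the limit. Indeed,
$[v_j]_{C^{2+\alpha_0,\frac{2+\alpha_0}{2}}(Q_1)}=2^{-k_j(2+\alpha_0)}S_j^{-1}[u_j]_{C^{2+\alpha_0,\frac{2+\alpha_0}{2}}(Q_{2^{-k_j}})}$.
The lower bound $S_j>j2^{-\beta(k_j+1)}$ only controls the prefactor by a multiple of $2^{k_j(\beta-2-\alpha_0)}/j$, which need not stay bounded; your consistency check $j\lesssim 2^{k_j(\beta-2-\alpha_0)}$ is compatible with it blowing up. The $C^{1+\alpha}$ compactness you have transmits only $v(0,0)=0$ and $Dv(0,0)=0$. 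So the limit could be a nonzero quadratic caloric polynomial $\frac12 x^{T}Mx+ct$ with $\mathrm{tr}(A_0M)=c$, and no contradiction arises.

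\textbf{Comparison with the paper.} Your dyadic sup-norm scheme is the Caffarelli--Karp--Shahgholian iteration the paper uses for Theorem~\ref{Thm1.4}. For this theorem the paper instead normalizes by the weighted seminorm $\phi_j(\rho)=\sup_{\rho<r<1/2}r^{2+\alpha_0-\beta}[u_j]_{C^{2+\alpha_0,\frac{2+\alpha_0}{2}}(Q_r)}$. This gives $[w_j]_{C^{2+\alpha_0,\frac{2+\alpha_0}{2}}(Q_R)}\le R^{\beta-2-\alpha_0}$ uniformly, hence $C^2$ convergence and $D^2w_0(0,0)=\partial_t w_0(0,0)=0$. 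The price is a nondegeneracy condition expressed through a seminorm, which must then be carried to the limit.

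The rest of your bookkeeping is sound:
\begin{itemize}
\item the right-hand side coefficient tends to zero exactly because $\beta\le\frac{2}{1-\gamma}$;
\item the bound $4^{k_j}S_j\lesssim 2^{-k_j\alpha_0}\to 0$ makes the limit operator the linearization at $\mathrm{O}_n$;
\item the first-failing-index choice gives a global limit with $\sup_{Q_{1/2}}|v|=1$.
\end{itemize}
Combined with the paper's normalization, this line of argument can at best cover the range $\beta<3$.
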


\begin{remark}[\bf Sharpness of the exponent] The exponent \(\alpha = \frac{2}{1-\gamma}\) represents the optimal growth rate at branching points.
The gain \(\beta > 2+\alpha_0\) reflects an improvement over the natural scaling, obtained via a tangential approximation argument. In fact, if we consider $u \in \mathcal{G}$, in view of equation \eqref{problem2}, under the intrinsic scaling 
\[
v_r(x,t) := \frac{u( rx,\,  r^2 t)}{r^{\alpha}},
\]
 we conclude that it satisfies
\[
\frac{1}{r^{\alpha-2}} F\!\left(r^{\alpha-2} D^2 v_r,\, rx, r^2 t\right)-\partial_{t}v_r
+ r \,\langle B(rx,r^2 t), Dv_r \rangle
= r^{\alpha\gamma + 2-\alpha } \Big( (v_r^+)^\gamma - (v_r^-)^\gamma \Big),
\]
in the viscosity sense. We obtain the operator $$F_r(X,x,t) : = \frac{1}{r^{\alpha-2}} F\!\left(r^{\alpha-2} X,\, rx, r^2 t\right),$$ which is also uniformly elliptic, with the same constants that $F$. Defining $
 B_r(x,t): = rB(rx,r^2t),$ and taking $\alpha(\gamma -1) + 2\geq 0 \iff \alpha \leq \frac{2}{1-\gamma}$, it follows that $v_r$ solves
\[
F_r(D^2 v_r, x,t)-\partial_{t} v_r 
+ \langle B_r(x,t), Dv_r \rangle
=  (v_r^+)^\gamma - (v_r^-)^\gamma, \mbox{ in }  \ Q_r : = B_r \times (-r^2,0].
\]
Thus, the best exponent is $\alpha=\frac{2}{1-\gamma}$.
\end{remark}

Our final result consists of an analysis along the branching points of each phase term in the source term. More precisely, starting from estimates for the positive part (respectively, the negative part), we obtain the same type of estimate for the negative part (respectively, the positive part), a phenomenon known as \textit{flipping estimates}. This is summarized in the following result.
\begin{theorem}[\bf Flipping estimates]\label{Thm1.4}
Let $u\in C^{0}(Q_{1})$ be a viscosity solution to \eqref{problem2}, and let $(x_{0},t_{0})\in \Gamma(u)\cap Q_{1/2}$. Assume that the structural conditions $(A_1)-(A_4)$ are valid and that there exists $r_{0}>0$ such that $Q_{r_{0}}(x_{0},t_{0})\subset\subset Q_{1}$. If
\[
\sup_{Q_r(x_{0},t_{0})} u^{+} \leq \mathrm{C}_0 r^{\frac{2}{1-\gamma}} \quad (\text{resp. } \sup_{Q_r(x_{0},t_{0})} u^- \leq \mathrm{C}_0 r^{\frac{2}{1-\gamma}}) \quad \text{for all } r \in (0,r_0],
\]
then there exists \(\mathrm{C}_1 > 0\) such that
\[
\sup_{Q_r(x_{0},t_{0})} u^- \leq \mathrm{C}_1 r^{\frac{2}{1-\gamma}} \quad \big(\text{resp. } \sup_{Q_r(x_0,t_{0})} u^+ \leq \mathrm{C}_1 r^{\frac{2}{1-\gamma}}\big) \quad \text{for all } r \in (0,r_0].
\]
\end{theorem}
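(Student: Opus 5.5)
We prove the statement in the case where $u^{+}$ satisfies the growth bound; the other case is symmetric. It follows by applying the argument to $-u$, which solves an equation of the same type with $\tilde F(\mathrm{X},x,t):=-F(-\mathrm{X},x,t)$ (same ellipticity constants) and the same drift. After translation we may assume $(x_0,t_0)=(0,0)$. Write $\alpha:=\frac{2}{1-\gamma}$ and set
\[
S(r):=\sup_{Q_r}u^{-}.
\]

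\textbf{Step 1: dyadic control.} We aim to prove the following statement: there is a constant $\mathrm{C}_1$ (depending on the data, $\mathrm{C}_0$ and $r_0$) such that for every dyadic $r=2^{-k}r_0$ either
\[
S(r/2)\le \mathrm{C}_1 (r/2)^{\alpha} \quad\text{or}\quad S(r/2)\le 2^{-\alpha}S(r).
\]
Iterating this alternative, and using the crude bound $S(r_0)\le\|u\|_{L^\infty}$ at the first scale, gives $S(r)\le \mathrm{C}_1 r^{\alpha}$ for all $r\le r_0$. Non-dyadic radii are then handled by monotonicity of $S$.

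\textbf{Step 2: contradiction and blow-up.} Suppose the dyadic statement fails. Then there are solutions $u_k$ (all with the same data bounds and the same $\mathrm{C}_0$, each with $(0,0)\in\Gamma(u_k)$) and radii $r_k$ such that
\[
S_k(r_k/2)>k\,(r_k/2)^{\alpha} \quad\text{and}\quad S_k(r_k/2)>2^{-\alpha}S_k(r_k).
\]
Consider the rescalings
\[
v_k(x,t):=\frac{u_k(r_kx,r_k^2t)}{S_k(r_k)}.
\]
By the scaling computation in the Remark following Theorem~\ref{Thm1.3}, $v_k$ solves
\[
F_k(D^2v_k,x,t)-\partial_t v_k+\langle B_k,Dv_k\rangle=\varepsilon_k\big((v_k^+)^\gamma-(v_k^-)^\gamma\big),
\qquad \varepsilon_k=\frac{r_k^{2}S_k(r_k)^{\gamma}}{S_k(r_k)}=\Big(\frac{r_k^{\alpha}}{S_k(r_k)}\Big)^{1-\gamma}.
\]
Here $F_k$ has the same ellipticity constants as $F$, and $B_k(x,t)=r_kB(r_kx,r_k^2t)\to0$. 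The normalized solutions satisfy:
\begin{itemize}
\item $\sup_{Q_1}v_k^-=1$, and $\sup_{Q_{1/2}}v_k^-\ge 2^{-\alpha}$;
\item $\sup_{Q_1}v_k^+\le \mathrm{C}_0 r_k^{\alpha}/S_k(r_k)\to0$;
\item $\varepsilon_k\to0$, because $S_k(r_k)\ge S_k(r_k/2)>k(r_k/2)^\alpha$.
\end{itemize}

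\textbf{Step 3: passing to the limit.} The functions $v_k$ are uniformly bounded and solve uniformly parabolic equations with vanishing right-hand side. Krylov--Safonov and the $C^{1+\alpha}$ estimates of \cite{Wang92} give compactness in $C^{1,\frac12}_{\rm loc}$. By the oscillation assumption in $(A_3)$, the frozen operators converge to a constant-coefficient limit $F_\infty$. Stability of viscosity solutions then yields a limit $v_\infty\le0$ in $Q_1$ with
\[
\partial_tv_\infty=F_\infty(D^2v_\infty),\qquad v_\infty(0,0)=0,\qquad \sup_{Q_{1/2}}(-v_\infty)\ge 2^{-\alpha}.
\]
Since $-v_\infty\ge0$ solves a uniformly parabolic equation and attains its minimum $0$ at the interior-top point $(0,0)$, the strong maximum principle (Harnack) forces $v_\infty\equiv0$ on the past cylinder $Q_1$. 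This contradicts $\sup_{Q_{1/2}}(-v_\infty)\ge 2^{-\alpha}$, closing the argument.

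\textbf{Main obstacle.} The delicate point is Step 3: ensuring that the sup bound survives in the limit. Locally uniform convergence only controls compact subsets, while $\sup_{Q_{1/2}}$ involves the closed cylinder; this is fine because $\overline{Q_{1/2}}$ lies inside $Q_1$ up to the top time $t=0$, where interior-in-time estimates hold. I would also check that the Harnack/strong maximum principle propagates $v_\infty\equiv0$ backward in time from $(0,0)$ throughout $Q_{1/2}$; this is exactly the parabolic strong maximum principle for the past cylinder. A secondary point is the first case of the alternative, i.e. making sure the iteration is anchored by $S(r_0)\le \|u\|_{L^\infty}\le \mathrm{M}_0$, which yields $\mathrm{C}_1\ge \mathrm{M}_0r_0^{-\alpha}$.
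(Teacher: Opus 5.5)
Your proof is correct and is essentially the paper's argument: a dyadic alternative of the form $s_{j+1}\le\max\{\bar{\mathrm{C}}2^{-\bar\alpha(j+1)},2^{-\bar\alpha}s_j\}$ is proved by contradiction via a blow-up whose limit is a nonpositive solution of a constant-coefficient equation vanishing at the origin, this limit is ruled out by the strong maximum principle, and the alternative is then iterated from the first scale. The differences are cosmetic: you track $\sup_{Q_r}u^-$ rather than $\|u\|_{L^\infty(Q_r)}$, normalize on the outer cylinder, and get the ``resp.'' case from $-u$ with $-F(-\mathrm{X},x,t)$. One small remark: the strong maximum principle only needs the Pucci inequality for the limit, so you can drop the constant-coefficient limit, which in the regime $S_k(r_k)/r_k^2\to0$ does not follow from $(A_3)$ alone and also uses $(A_4)$.
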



\bigskip

As a final comment, we stress that our strategy is flexible enough to be employed to obtain improved estimates for the class of H\'{e}non-type models with semilinear structure as follows:
$$
F(D^2u,x,t) - \partial_{t}u + \langle B(x,t), Du\rangle = f_{\gamma}(u) \quad \text{in } \quad Q_1,
$$
where 
$$
f_{\gamma}(u) \lesssim \mathfrak{g}(x, t)\left[(u^+)^\gamma - (u^-)^\gamma\right] \quad \text{with} \quad |\mathfrak{g}(x,t)|\lesssim \mathrm{C}_{g} d_{p}((x,t),(x_0,t_0))^{\theta}.
$$
In such a scenario, as in Theorem \ref{Thm1.3},  we obtain improved estimates along branching points ($(x_0,t_0) \in \Gamma(u)$) of existing solutions
$$
\sup_{Q_{r}(x_{0},t_{0})}|u|
\leq \mathrm{C}_0(\text{data})\, r^{\frac{2+\theta}{1-\gamma}}
\quad \text{for every } r\in(0,1/2).
$$

Similarly, estimates as the ones in Theorem \ref{Thm1.4} hold. Hence, in such a setting, we obtain 
$$
\text{If} \quad \sup_{Q_r(x_{0},t_{0})} u^{\pm} \leq \mathrm{C}_0 r^{\frac{2+\theta}{1-\gamma}}, \quad \text{then} \sup_{Q_r(x_{0},t_{0})} u^{\mp} \leq \mathrm{C}_0 r^{\frac{2+\theta}{1-\gamma}} \quad \text{for all } r \in (0,r_0].
$$

\subsection{The heuristic behind our regularity estimates}

\noindent It is worth emphasizing that our approach to establishing gradient regularity estimates (Theorem~\ref{Thm1.1}) arises naturally from a refined argument based on controlled geometric decay (inspired by \cite{CKS2000, daSTei17}), in which solutions are approximated by affine functions. The proof proceeds in three main steps:

\begin{itemize}

\item[\textbf{Step 1.}] Under suitable smallness assumptions on the source term $f$, the operator coefficients, and the components of the Hamiltonian (namely, $\mathfrak{B}$), solutions to the original problem can be approximated by $\mathfrak{F}$-caloric profiles:
$$
\partial_{t}w - F(D^2w,x,t) + \langle B(x,t), Dw\rangle = f(x,t).
$$
Moreover, for some universal constant $\eta > 0$, the following implication holds:
{\small{
$$
\max\left\{\left(\intav{Q_{1}}\Phi_{F}(x,t)^{n+1}\,dxdt\right)^{\frac{1}{n+1}},\|B\|_{L^{\infty}(Q_{1};\mathbb{R}^{n})},\|f\|_{L^{\infty}(Q_{1})}\right\}\leq \eta  \quad 
\Rightarrow \quad
\left\{
\begin{array}{l}
\displaystyle \sup_{\mathrm{Q}_{1/2}} |w - \mathfrak{h}| \ll 1, \\[6pt]
\text{where } \partial_t \mathfrak{h} - \mathfrak{F}(D^2 \mathfrak{h}) = 0, \\[4pt]
\text{and } \mathfrak{h} \in C_{\mathrm{loc}}^{1+\alpha_{\mathrm{F}}, \frac{1+\alpha_{\mathrm{F}}}{2}}.
\end{array}
\right.
$$}}

\item[\textbf{Step 2.}] By means of a geometric iteration scheme, we establish pointwise $\mathrm{C}^{1,\alpha^{\sharp}}$ regularity for solutions to \eqref{problemwithoutdepofu}. More precisely, there exists a sequence of affine functions $\{\mathfrak{l}_k\}_{k \in \mathbb{N}}$ such that
$$
\max \left\{ \|f\|_{L^{\infty}(\mathrm{Q}_1)}, \|\mathfrak{B}\|_{L^{\infty}(\mathrm{Q}_1; \mathbb{R}^n)} \right\} \leq \delta_0
\quad \text{and} \quad 
|f(x,t)-f(x_0, t_0)|\leq \mathrm{C}_{f}\, d_{p}((x,t),(x_0,t_0))^{\theta}.
$$
Then,
$$
\sup_{\mathrm{Q}_{\rho^k}(x_0, t_0)} \frac{\left|u(x, t)-\mathfrak{l}_k(x)\right|}{\rho^{k(1+\alpha^{\sharp})}}\leq 1 
\quad \Longrightarrow \quad 
u \in \mathrm{C}^{1, \alpha^{\sharp}} \text{ at } (x_0, t_0),
$$
where the conclusion follows from the Dini--Campanato embedding.

\item[\textbf{Step 3.}] A normalization and scaling procedure, combined with a standard covering argument, reduces the general setting to the framework handled in the previous steps.
\end{itemize}

\bigskip

We now outline the main heuristic behind the derivation of the a priori estimates for the semilinear evolution model \eqref{problem2}, that is, the refined estimates stated in Theorem \ref{Thm1.3}. The key idea is to access these improved regularity estimates through a geometric tangential method, following the framework introduced in (cf. \cite{SP}).


By way of explanation, consider a fully nonlinear elliptic operator \( F: \text{Sym}(n) \to \mathbb{R} \) such that $F(\mathrm{O}_n) = 0$ (this is not a restrictive assumption). Then, the family of elliptic scaling functions defined by
\[
\mathcal{G}_{\varrho}(\mathrm{X}) := \frac{1}{\varrho} F(\varrho \mathrm{X}), \quad \text{for } \varrho > 0,
\]
constitutes a continuous family of operators that preserves the ellipticity constants of the original equation (i.e., $0< \lambda\leq \Lambda$). 
Hence, if \( F: \text{Sym}(n) \to \mathbb{R}\) is differentiable (for instance, at the origin, recalling the normalization \( F(\mathrm{O}_n) = 0 \)), then we have
\[
\displaystyle \lim_{\varrho \to 0} \mathcal{G}_{\varrho}(\mathrm{X}) = \lim_{\varrho \to 0} \frac{F(\varrho \mathrm{X}+\mathrm{O}_n)-F(\mathrm{O}_n)}{\varrho} = \mathrm{D}_{\mathrm{X}}F(\mathrm{O}_n) = \sum_{i,j=1}^{n}\frac{ \partial F}{\partial \mathrm{X}_{ij}}(\mathrm{O}_n) \mathrm{X}_{ij}.
\]
In other words, the linear, uniformly elliptic operator \(\displaystyle \mathrm{X} \mapsto \sum_{i,j=1}^{n}\frac{ \partial F}{\partial \mathrm{X}_{ij}}(\mathrm{O}_n) \mathrm{X}_{ij}  = \mathrm{tr}(\mathfrak{A}_0\mathrm{X})\) represents the tangential equation associated with \(\mathcal{G}_{\varrho} \) in the limiting configuration as \( \varrho \to 0 \). 

In this setting, if \( u: \mathrm{Q}_1 \to \mathbb{R} \) is a solution to an equation involving the original operator \( F: \text{Sym}(n) \to \mathbb{R}\), i.e.,
$$
\partial_t u - F(D^2 u) + \langle \mathfrak{B}(x), Du\rangle = f(x, t) \quad \text{in} \quad \mathrm{Q}_1,
$$
then the scaled function \( u^{\mathfrak{a}}_{\varrho}(x, t) := \frac{1}{\varrho^{2+\alpha_0}\mathfrak{a}} u(\varrho x, \varrho^2 t) \), which is a blow-up at branching points, satisfies a corresponding equation, namely,
$$
\partial_t u^{\mathfrak{a}}_{\varrho} - \mathcal{G}_{\varrho^{\alpha} \mathfrak{a}}(D^2 u^{\mathfrak{a}}_{\varrho}(x, t)) + \langle \mathfrak{B}_{\varrho}^{\mathfrak{a}}(x, t), D u^{\mathfrak{a}}_{\varrho}(x, t)\rangle = \frac{1}{\varrho^{\alpha} \mathfrak{a}} f(\varrho x, \varrho^2 t) := f_{\varrho}^{\mathfrak{a}}(x, t) \quad \text{in} \quad \mathrm{Q}_{1/\varrho}, 
$$
where $\mathfrak{B}_{\varrho}^{\mathfrak{a}}(x, t) = \frac{\varrho}{a} \mathfrak{B}(\varrho x, \varrho^2 t)$, and  $\mathfrak{a}>0$ is chosen in such a way $\|f_{\varrho}^{\mathfrak{a}}\|_{\infty, \mathrm{Q}_1} = \text{o}(1)$ and $\|\mathfrak{B}_{\varrho}^{\mathfrak{a}}\|_{\infty, Q_1} = \text{o}(1)$ as $\varrho \to 0$.
Moreover, if one establishes that the norm \( \| u^{\mathfrak{a}}_{\varrho}\|_{C^{2+\alpha_{0}, \frac{2+\alpha_{0}}{2}}(Q_{r})} \leq \mathrm{C}_0 \) is bounded, then \( u^{\mathfrak{a}}_{\varrho} \) is a normalized solution (in the $(2+\alpha_0)$-setting) to the ``\( \varrho \)-scaled equation.''

This approach enables us to exploit regularity results (e.g., local derivative estimates) available for the linear tangential equation via compactness and stability arguments. In particular, we obtain the following:

$$
\begin{array}{ccc}
 \left\{ 
 \begin{array}{ccl}
 \mathcal{G}_{\varrho^{\alpha} \mathfrak{a}}(\mathrm{X}) \to  \mathrm{tr}(\mathfrak{A}_0 \mathrm{X}) & \text{as}& \varrho \to 0    \\
 \mathfrak{B}_{\varrho}^{\mathfrak{a}} \to 0 & \text{as} & \varrho \to 0\\
 f_{\varrho}^{\mathfrak{a}} \to 0 & \text{as}& \varrho \to 0\\
 u^{\mathfrak{a}}_{\varrho} \to \mathbf{U} & \text{as}& \varrho \to 0
 \end{array}
 \right.& \Rightarrow & \partial_t \mathbf{U} -\mathrm{tr}(\mathfrak{A}_0 D^2 \mathbf{U}) = 0 \quad \text{in} \quad \mathbb{R}^n\times (-\infty, 0].
\end{array}
$$
Moreover, one has
\[
\|D^2 \mathbf{U}\|_{L^\infty(Q_{r/2})}+\|\partial_{t}\mathbf{U}\|_{L^\infty(Q_{r/2})}\le\frac{C}{r^2} \|\mathbf{U}\|_{L^\infty(Q_r)},
\mbox{ \
and \
}
\|D \mathbf{U}\|_{L^\infty(Q_{r/2})}\le\frac{C}{r} \|\mathbf{U}\|_{L^\infty(Q_r)}.
\]
In this setting, the available derivative estimates for the linear limiting profile allow us to derive enhanced regularity estimates for the original profile via a blow-up argument along branching points.

We refer the reader to the following insightful recent surveys \cite{SP}, \cite[Ch.~5]{João}, \cite{Teixeira2016}, and \cite{Teixeira2020} for developments in geometric tangential methods and their applications to nonlinear PDEs and related topics.


\bigskip

Finally, to obtain flipping estimates, i.e., Theorem \ref{Thm1.4}, we have the following geometric  strategy: if $(x_{0}, t_0) \in \Gamma ( u) \cap \mathrm{Q}_{1/2}$  and
 there exists  $r_{0}  >0$ such  that 
$$\sup\limits_{\mathrm{Q}_{r}( x_{_{0}}, t_0)} u^{\mp} \leq \mathrm{C}_{0} r^{\frac{2}{1-\gamma}} \ \mbox{ for all } \ \ r\in ( 0,r_{0}],
$$
by an iterative argument there exists a positive  constant $\mathrm{C}_0$ such that
$$
\| u\| _{L^{\infty }(\mathrm{Q}_{2^{-( j+1)}})} \leq \max\left\{\frac{\mathrm{C}_0}{2^{\frac{2}{1-\gamma}( j+1)}} ,\frac{\| u\| _{L^{\infty }(\mathrm{Q}_{2^{-j}})}}{2^{\frac{2}{1-\gamma}}}\right\}.
$$
As in the previous estimates, this geometric decay follows from a refined blow-up argument combined with geometric tangential methods. These techniques leverage an improvement-of-flatness property of a one-phase limiting profile associated with a linear operator. Consequently, there exists $\mathrm{C}_{1} > 0$ such that
  $$
\sup\limits_{\mathrm{Q}_{r}( x_{_{0}}, t_0)} u^{\pm} \leq \mathrm{C}_{1}(\text{universal}) r^{\frac{2}{1-\gamma}}\ \mbox{ for all } \ \ r\in ( 0,r_{0}].
$$

\subsection{State-of-the-art to semilinear parabolic models}\label{State-of-the-art}

\noindent
A central theme in the modern theory of evolution equations concerns the qualitative behavior of solutions, with particular emphasis on the regularity of bounded solutions. Establishing sharp regularity estimates, including H\"older continuity, gradient bounds, and higher-order smoothness, is fundamental to understanding stability, uniqueness, and long-time dynamics. These questions are closely connected to nonlinear potential theory, geometric methods, and the fine structure of degenerate and singular operators.

\medskip

A prominent class of nonlinear evolution equations is given by the \emph{reaction--diffusion models}, introduced in the twentieth century to describe a broad range of processes in Physics and Biology. Their applications span Mechanics, Materials Science, Biophysics, and Ecology, where they capture diffusion, transport, and nonlinear interaction effects.

\medskip

In general, such phenomena are modeled by parabolic equations in divergence form
\begin{equation}\label{eq:general_parabolic}
\partial_t u = \operatorname{div} \mathfrak{A}(u,\nabla u,x,t) + \mathfrak{B}(u,\nabla u,x,t),
\end{equation}
where $\operatorname{div} \mathfrak{A}$ describes diffusion, while the lower-order term $\mathfrak{B}$ encodes reaction, absorption, or convection. A canonical prototype is the \emph{semilinear heat equation}
\begin{equation}\label{eq:semilinear_heat}
\partial_t u = \Delta u + f(u),
\end{equation}
which serves as a fundamental model for nonlinear diffusion with source or absorption terms (see, e.g.,~\cite{BE1989,Smoller1983}).

\medskip

For concreteness, one may interpret $u \geq 0$ as a temperature (or concentration), where the Laplacian models diffusion and the nonlinearity $f(u)$ represents internal sources, sinks, or phase transitions. These equations are typically posed in a domain $\Omega \subset \mathbb{R}^n$, supplemented with an initial condition
\begin{equation}\label{eq:initial_data}
u(x,0) = u_0(x),
\end{equation}
and appropriate boundary conditions.

\medskip

A key aspect in the analysis of semilinear parabolic equations is the interplay between diffusion and nonlinear effects, which leads to phenomena such as pattern formation, blow-up, and the emergence of \emph{free boundaries}. In particular, nonlinear absorption or degeneracy may produce interfaces separating regions where the solution is positive from those where it is negative or vanishes, naturally leading to free boundary problems. The analysis of these interfaces, together with the regularity of solutions and their gradients near the free boundary, is a central topic in current research, bridging parabolic regularity theory with geometric and variational techniques.

By way of illustration in the one-phase scenario, recently, Audrito and Sanz-Perela \cite{AS-P2024} introduced a novel elliptic regularization framework to establish the existence of strong solutions for a class of semilinear parabolic free boundary problems
\begin{equation}\label{eq}
\partial_t u - \Delta u = -f_\gamma(u) \quad \text{in } Q := \mathbb{R}^n \times (0,\infty), \qquad u|\{t=0\} = u_0 \quad \text{in } \mathbb{R}^n,
\end{equation}
where $n \geq 1$, $\gamma \in [1,2)$, $u_0 \geq 0$, and $f_\gamma(u) := \gamma \chi_{\{u>0\}} u^{\gamma-1}$.

Problems of this type arise in chemical engineering and in the transport of thermal energy in plasmas (see~\cite{Martinson1976}). The mathematical analysis of such equations and their associated free boundaries was initiated by Caffarelli~\cite{CPS2004}  in the case $\gamma=1$, where \eqref{eq} reduces to a variant of the classical Stefan problem. We also must highlight the contribution of Weiss~\cite{Weiss2000} (see also Choe and Weiss~\cite{Choe-Weiss2003}), who established the classification of blow-up limits and fine regularity properties of the free boundary, together with sharp estimates on the parabolic Hausdorff dimension of $\partial\{u>0\}$.


\medskip

The literature on two-phase problems for quasilinear models (stationary scenario) remains relatively scarce, largely due to the lack of fundamental analytical tools, such as monotonicity formulas available in the classical Laplacian setting, which substantially streamline the analysis. Shahgholian and Fotouhi \cite{Fot-Shah2017} investigated the semilinear model
\begin{eqnarray}\label{eqsemilinear}
\Delta u = \lambda_{+}(x)(u^{+})^{q-1} - \lambda_{-}(x)(u^{-})^{q-1} \quad \text{in} \quad \mathrm{B}_1,
\end{eqnarray}
with emphasis on the regularity of solutions and the structure of the free boundary \(\partial\{\pm u>0\}\). Assuming that \(\lambda_{\pm}\) are Lipschitz continuous and \(1 < q < 2\), they derived local regularity results for both the solutions and their associated free boundaries.

In a related direction, Soave and Terracini \cite{SoaTer18} studied nodal sets for a particular instance of \eqref{eqsemilinear}, corresponding to constant positive \(\lambda_{\pm}\) and \(1 \leq q < 2\). They established finiteness of the vanishing order at every point, characterized the full spectrum of possible orders, proved a weak non-degeneracy property, and obtained regularity of the nodal set along with a partial stratification result.

Inspired in the stationary scenario, semilinear two-phase problems with nonlinearities of the form
\[
f(u) = (u^+)^p - (u^-)^p, \qquad u^+ := \max\{u,0\}, \quad u^- := \max\{-u,0\},
\]
arise in reaction--diffusion processes where the reaction rate follows a power-law profile with phase-dependent sign. When coupled with the heat operator, these equations provide a natural framework for modeling phase transitions, combustion phenomena, and free boundary dynamics.

\medskip

\noindent
The problem is typically formulated in a domain $\Omega \subset \mathbb{R}^n$, with initial datum $u_0$. The evolution is governed by
\[
 \Delta u  - \partial_t u = (u^+)^p - (u^-)^p.
\]

\medskip

These models exhibit the following key features:
\begin{itemize}
\item \emph{Two-phase dynamics:} For $p \in (0, 1)$, the nonlinearity induces a phase-segregated regime. In $\{u>0\}$, the term $(u^+)^p$ enhances growth, whereas in $\{u<0\}$, the term $-(u^-)^p$ acts as an absorption mechanism (cf. \cite{Fot-Shah2017}).

\item \emph{Free boundaries formation:} For $p \in (0,1)$, the equation becomes strongly degenerate. The sets $\{u>0\}$ and $\{u<0\}$ evolve in time, and their interface $\{u=0\}$ defines a free boundary, whose analysis requires tools from geometric measure theory and modern regularity theory (cf. \cite{Choe-Weiss2003}).
\end{itemize}

\noindent
{\bf Main obstacles to overcome:} The study of semilinear parabolic equations with \emph{sign-changing solutions} presents substantial additional difficulties compared to the one-phase setting (cf. \cite{DaSO2019} and \cite{DaSOS2018} for enlightening examples). In particular, classical techniques for local regularity estimates are no longer applicable and must be replaced by alternative approaches.

\medskip

\noindent
A major obstruction is the limited applicability of the maximum principle. In the one-phase setting, typically associated with nonnegative solutions, it is a key tool for deriving optimal estimates (see, e.g.,~\cite{DaSO2019}). However, in the present framework, two issues arise. First, the validity of the maximum principle in the parabolic setting often requires additional structural assumptions, such as monotonicity in time (cf.~\cite[Theorem 6.1]{DaSO2019}), which are not satisfied here. Second, in contrast to the one-phase case, free boundary points are no longer local minima of the solution.

\medskip

\noindent
To overcome these difficulties, we adopt an alternative strategy inspired by~\cite[Section 3]{L-S-R-O2025}, based on the analysis of solution growth under the natural scaling of the equation, combined with refined derivative estimates for associated linear profiles. This approach bypasses comparison principles and instead exploits intrinsic scaling and compactness methods.

\medskip

\noindent
As a result, we obtain sharp regularity estimates at \emph{branching points}, namely points where the solution vanishes together with some of its derivatives, typically up to second order. These results reveal new structural features of the model in the two-phase setting, which are absent in the classical one-phase theory.

To the best of our mathematical knowledge, there are not further regularity results in the scenario of two-phase semilinear models like  \eqref{P} (including the linear  setting), and such an absent of investigation was one of main impetus of developing a deep analysis in branching points of such evolution models.   

\section{Preliminaries}

In this section, we introduce the notation that will be used throughout the remainder of this manuscript. 
For $x_{0}\in\mathbb{R}^{n}$ and $r>0$, we denote by $B_{r}(x_{0})$ the open ball centered at $x_{0}$ with radius $r$, namely,
\[
B_{r}(x_{0})=\{x\in\mathbb{R}^{n}\,:\,|x-x_{0}|<r\}.
\]
When $x_{0}=0$, we simply write $B_{r}:=B_{r}(0)$.
Given $(x_{0},t_{0})\in\mathbb{R}^{n}\times\mathbb{R}$ and $r>0$, we define the lower parabolic cylinder
\[
Q_r(x_0,t_0) := B_r(x_0) \times (t_0 - r^2, t_0].
\]
When $x_0=0$ and $t_0=0$, we omit the center and write $Q_r:=Q_r(0,0)$.

We also introduce the parabolic distance between two points $P_{1}=(x,t)$ and $P_{2}=(y,s)$ in $\mathbb{R}^{n}\times\mathbb{R}$, defined by
\[
d_p\big((x,t),(y,s)\big):=\max\left\{|x-y|,|t-s|^{\frac{1}{2}}\right\}.
\]

In the parabolic setting, we will need the notion of H\"older spaces. Let $\alpha\in(0,1]$ and let $Q$ be a domain in $\mathbb{R}^{n}\times\mathbb{R}$. We say that $u\in C^{\alpha,\frac{\alpha}{2}}(\overline{Q})$ if
\[
\|u\|_{C^{\alpha,\frac{\alpha}{2}}(\overline{Q})}
:= \|u\|_{L^\infty(\overline{Q})} + [u]_{\alpha,\overline{Q}}<+\infty,
\]
where
\[
[u]_{\alpha,\overline{Q}}
:= \sup_{\substack{(x,t),(y,s)\in \overline{Q}\\(x,t)\neq(y,s)}}
\frac{|u(x,t)-u(y,s)|}{d_{p}((x,t),(y,s))^{\alpha}}.
\]

We also introduce the following notation:
\[
[u]_{1+\alpha,\overline{Q}}
:=\sup_{\substack{(x,t),(x,s)\in \overline{Q}\\t\neq s}}
\frac{|u(x,t)-u(x,s)|}{|t-s|^{\frac{1+\alpha}{2}}}.
\]
With this definition at hand, we can introduce the higher order parabolic H\"older spaces. We define the H\"older spaces $C^{1+\alpha,\frac{1+\alpha}{2}}(\overline{Q})$ and $C^{2+\alpha,1+\frac{\alpha}{2}}(\overline{Q})$ as the sets of all functions $u$ such that
\begin{eqnarray*}
\|u\|_{C^{1+\alpha,\frac{1+\alpha}{2}}(\overline{Q})}
&:=& 
\|u\|_{L^{\infty}(\overline{Q})}
+\|Du\|_{L^{\infty}(\overline{Q})}
+[u]_{C^{1+\alpha,\frac{1+\alpha}{2}}(\overline{Q})}
< +\infty,
\\
\|u\|_{C^{2+\alpha,1+\frac{\alpha}{2}}(\overline{Q})}
&:=&
\|u\|_{L^{\infty}(\overline{Q})}
+\|Du\|_{L^{\infty}(\overline{Q})}
+\|\partial_{t}u\|_{L^{\infty}(\overline{Q})}
+\|D^{2}u\|_{L^{\infty}(\overline{Q})}+
 [u]_{C^{2+\alpha,\frac{2+\alpha}{2}}(\overline{Q})}
< +\infty,
\end{eqnarray*}
where 
\begin{equation*}
 [u]_{C^{1+\alpha,\frac{1+\alpha}{2}}(\overline{Q})} :=   [Du]_{\alpha,\overline{Q}}
+[u]_{1+\alpha,\overline{Q}}
\end{equation*}
and
\begin{equation*}
 [u]_{C^{2+\alpha,\frac{2+\alpha}{2}}(\overline{Q})} :=    [\partial_{t}u]_{\alpha,\overline{Q}}
+[D^{2}u]_{\alpha,\overline{Q}}
+[Du]_{1+\alpha,\overline{Q}}.
\end{equation*}

Finally, we introduce the notion of the solution adopted throughout this paper. Consider the model equation
\begin{equation}\label{2.1}
\partial_{t}u-F(D^{2}u,x,t)+\langle B(x,t),Du\rangle=f(x,t,u)
\quad \text{in } Q\subset \mathbb{R}^{n}\times\mathbb{R},    
\end{equation}
where $Q$ is a domain, $B:Q\to\mathbb{R}^{n}$ is a continuous vector field, and 
$f:Q\times\mathbb{R}\to\mathbb{R}$ is a continuous function.

\begin{definition}
A function $u\in C(Q)$ is called a viscosity subsolution of \eqref{2.1}, if for every point $(x_{0},t_{0})\in Q$ and every test function
$\varphi\in C^{2,1}(Q)$ such that $u-\varphi$ attains a local maximum at
$(x_{0},t_{0})$, we have
\[
\partial_{t}\varphi(x_{0},t_{0})
-F(D^{2}\varphi(x_{0},t_{0}),x_{0},t_{0})
+\langle B(x_{0},t_{0}),D\varphi(x_{0},t_{0})\rangle
\leq f(x_{0},t_{0},\varphi(x_{0},t_{0})).
\]
Similarly, $u\in C(Q)$ is called a viscosity supersolution if for every
$(x_{0},t_{0})\in Q$ and every $\varphi\in C^{2,1}(Q)$ such that $u-\varphi$
attains a local minimum at $(x_{0},t_{0})$, we have
\[
\partial_{t}\varphi(x_{0},t_{0})
-F(D^{2}\varphi(x_{0},t_{0}),x_{0},t_{0})
+\langle B(x_{0},t_{0}),D\varphi(x_{0},t_{0})\rangle
\geq f(x_{0},t_{0},\varphi(x_{0},t_{0})).
\]
We say that $u$ is a viscosity solution in $Q$ if it is both a viscosity
subsolution and a viscosity supersolution.
\end{definition}

The following well-known result, namely the Comparison Principle, will be needed in the proof of Theorem \ref{Thm1.2}. We refer the reader, for instance, to \cite[Theorem 8.2]{CIL92} for a proof.

\begin{theorem}[\bf Comparison Principle]\label{CompPrinc}
Assume the $F:\text{Sym}(n)\times Q_{r}\to \mathbb{R}$ is a continuous function and satisfies the structural condition $(A_1)$ and let \(B\in C^{0}(Q_{r},\mathbb{R}^{n})\) be a vector field, and \(f,g\) be continuous functions in \(Q_{r}\). If \(u\) and \(v\) are respectively a viscosity subsolution and a viscosity supersolution of \eqref{problemwithoutdepofu}, and \(u\leq v\) on \(\partial_{p}Q_{r}\), then \(u\leq v\) in \(Q_{r}\).
\end{theorem}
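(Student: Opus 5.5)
The plan is to reduce the statement to a maximum principle for the Pucci extremal operator with a bounded drift, applied to the difference $w:=u-v$. The argument has three steps.

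\textbf{Step 1 (strict subsolution).} Replace $u$ by $u_\varepsilon(x,t):=u(x,t)-\varepsilon\,(t+r^2)-\varepsilon$ for small $\varepsilon>0$. Then $u_\varepsilon$ is a strict viscosity subsolution: any test function $\varphi$ touching $u_\varepsilon$ from above yields $\partial_t\varphi-F(D^2\varphi,x,t)+\langle B,D\varphi\rangle\le f-\varepsilon$. Moreover $u_\varepsilon<v$ on $\partial_pQ_r$. It suffices to prove $u_\varepsilon\le v$ in $Q_r$ for every $\varepsilon$ and then let $\varepsilon\to0$.

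\textbf{Step 2 (inequality for the difference).} Show that $w_\varepsilon:=u_\varepsilon-v$ is a viscosity subsolution of
\[
\partial_t w-\mathcal{M}^{+}_{\lambda,\Lambda}(D^2w)-\|B\|_{L^\infty}|Dw|\le-\varepsilon \quad\text{in } Q_r .
\]
This is done by doubling the variables. Suppose $\max_{\overline{Q}_r}w_\varepsilon=\mu>0$. Maximize
\[
u_\varepsilon(x,t)-v(y,s)-\frac{|x-y|^2}{2\delta}-\frac{|t-s|^2}{2\delta}
\]
over $\overline{Q}_r\times\overline{Q}_r$. The boundary ordering forces the maximum points $(x_\delta,t_\delta)$ and $(y_\delta,s_\delta)$ to stay interior for $\delta$ small. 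We also have $|x_\delta-y_\delta|^2/\delta\to0$, and both points converge to a maximum point of $w_\varepsilon$. The parabolic Jensen--Ishii lemma \cite[Theorem 8.3]{CIL92} then supplies:
\begin{itemize}
\item the same time derivative $a$ and the same gradient $p_\delta=(x_\delta-y_\delta)/\delta$ in both jets;
\item matrices $X\le Y$ (after the usual matrix inequality).
\end{itemize}
Subtracting the two viscosity inequalities and using $(A_1)$ to bound $F(X,\cdot)-F(Y,\cdot)$ by $\mathcal{M}^{+}(X-Y)\le0$ gives
\[
\varepsilon\le F(X,x_\delta,t_\delta)-F(Y,y_\delta,s_\delta)+\langle B(y_\delta,s_\delta)-B(x_\delta,t_\delta),p_\delta\rangle .
\]
The $x$-dependence of $F$ is controlled by the continuity modulus in $(A_3)$, namely $\tau(d_p)(\|Y\|+1)$.

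\textbf{Step 3 (main obstacle: the drift).} The hard point is the first-order term $|B(x_\delta)-B(y_\delta)|\,|p_\delta|$. When $B$ is merely continuous, $|p_\delta|$ need not stay bounded, so the naive doubling argument is not enough. My first attempt is to use the $L^{n+1}$-viscosity framework of Crandall--Kocan--\'Swi\k{e}ch. There, since the equation is uniformly parabolic, a subsolution/supersolution pair of the same equation with continuous coefficients satisfies the Pucci-type inequality for $w$ directly. This uses $W^{2,1}_{n+1}$ approximations of $v$ (solve the equation with frozen data) rather than pointwise doubling. Alternatively, I would approximate $B$ and $F$ uniformly by smooth coefficients, for which doubling works, and pass to the limit using stability of viscosity solutions.

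\textbf{Step 4 (conclusion).} Once the Pucci inequality of Step 2 holds, conclude with an explicit barrier. The function
\[
\psi(x,t)=\mu'\,e^{\kappa(t+r^2)}\bigl(1+\cosh(\kappa x_1)\bigr)^{-1}\cdot 0+\varepsilon(t+r^2)
\]
is not needed; more simply, the parabolic ABP/maximum principle for $\partial_t-\mathcal{M}^+-\|B\|_\infty|D\cdot|$ with strict negative right-hand side excludes an interior positive maximum. Concretely, at an interior maximum of $w_\varepsilon$ touched by a constant test function one would get $0\le-\varepsilon$, a contradiction. Hence $w_\varepsilon\le0$, and letting $\varepsilon\to0$ gives $u\le v$ in $Q_r$.
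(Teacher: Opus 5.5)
\textbf{Gap: the step you flag in Step 3 is never closed.} The paper gives no argument of its own. It cites \cite[Theorem~8.2]{CIL92}, whose proof is essentially your Steps 1--2: strict-subsolution perturbation, doubling of variables, and the parabolic Ishii lemma. After the Ishii lemma you must show that the error terms vanish as $\delta\to0$, and under the stated hypotheses they need not.

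The drift error $|B(x_\delta,t_\delta)-B(y_\delta,s_\delta)|\,|x_\delta-y_\delta|/\delta$ is the product of a factor that tends to zero at no rate and a factor that is only $o(\delta^{-1/2})$. Contrary to Step 2, the coefficient term is not controlled by $(A_3)$ either; note also that $(A_3)$ is not a hypothesis of the statement. Ishii's lemma only gives $\|Y\|\le 3/\delta$, so $\tau(d_p)(\|Y\|+1)$ is of order $\tau(o(\sqrt{\delta}))/\delta$. This need not tend to zero even for Lipschitz $\tau$.

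This is exactly why \cite[Theorem~8.2]{CIL92} is stated under the structure condition (3.14) of \cite{CIL92}. That condition holds, e.g., for $B$ Lipschitz in $x$ and for $x$-dependence of the form $\mathrm{tr}(\sigma\sigma^{T}(x,t)X)$ with $\sigma$ Lipschitz in $x$. It uses the off-diagonal part of the matrix inequality, not just $\|Y\|\lesssim 1/\delta$. Your diagnosis is therefore accurate, and it shows the citation does not literally cover merely continuous $B$ and $F$. But it leaves the proof incomplete.

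\textbf{Neither remedy in Step 3 fills the gap.} In (a), the $L^{n+1}$-viscosity theory of \cite{CKS2000} yields the Pucci-type inequality for $u-\varphi$ when $\varphi$ is a strong $W^{2,1}_{n+1}$ function. Comparing two merely continuous viscosity sub/supersolutions this way requires such strong solutions, i.e.\ $W^{2,1}_{n+1}$ estimates for the frozen operators. $(A_1)$ does not give these for non-convex $F$. Moreover, solving with frozen data does not approximate an arbitrary supersolution $v$.

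Route (b) is circular. Stability passes equations to limits of solutions; it does not transfer comparison. And $u,v$ are not approximate sub/supersolutions of the smoothed equations: the discrepancy $\|B-B_k\|_\infty|D\varphi|+\eta_k(\|D^2\varphi\|+1)$, with $\eta_k\to0$ measuring $F-F_k$, is evaluated on arbitrary test functions whose derivatives at touching points are unbounded.

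Step 4 is harmless given a Pucci inequality. However, if the doubling at the global maximum worked, it would already yield the contradiction. The displayed $\psi$ is vacuous and should be deleted.

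\textbf{Two ways to get a complete proof.} First, you can add the hypotheses under which the cited theorem applies: (3.14) for the $(x,t)$-dependence of $F$, and $B$ Lipschitz in $x$. Then, doubling only the space variable as in \cite[Theorem~8.3]{CIL92}, the drift error is at most $L|x_\delta-y_\delta|^2/\delta\to0$.

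Second, you can restrict to what the paper actually uses. In Theorem~\ref{Thm1.2}, $v=\mathrm{C}[|x|^{2+\theta}+(-t)^{1+\theta/2}]$ is a classical $C^{2,1}$ supersolution. At an interior positive maximum $(x_0,t_0)$ of $u_\varepsilon-v$, the function $v+\max(u_\varepsilon-v)$ is an admissible test function. Your Step 1 then gives $f-\varepsilon\ge\partial_t v-F(D^2v,x_0,t_0)+\langle B,Dv\rangle\ge f$ at $(x_0,t_0)$. This is a contradiction that needs no doubling and no regularity of $B$ or of $F$ in $(x,t)$.
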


We conclude this section with the following stability result, whose proof follows along the same lines as in \cite[Theorem 6.1]{CKS2000}.

\begin{lemma}[{\bf Stability Lemma}]\label{Est}
Consider $\{\Omega_k\}_{k \in \mathbb{N}}$ be an increasing sequence of open sets in $\mathbb{R}^{n} \times \mathbb{R}$ such that $\Omega_k \subset \Omega_{k+1}$ and define $\Omega := \bigcup_{k=1}^{\infty} \Omega_k$. Suppose that $F$, $F_k$ are $(\lambda, \Lambda)$-parabolic  operators. Assume $f \in C^{0}(\mathbb{R},\Omega)$, $f_k \in C^{0}(\Omega_k,\mathbb{R})$, $B_{k}\in C^{0}(\Omega_{k};\mathbb{R}^{n})$, $B\in C^{0}(\Omega;\mathbb{R}^{n})$, and let $u_k \in C^0(\Omega_k)$ be a viscosity subsolutions (resp. supersolutions) of 
\[
\partial_{t}u_{k}-F_k(D^2 u_k,x,t)+ \langle B_{k}(x,t), Du_k\rangle= f_k(x,t,u_{k}) \quad \text{in} \quad \Omega_k,
\]
Suppose that $u_k \to u_{\infty}$ locally uniformly in $\Omega$, $F_{k}\to F$ locally uniformly in $\text{Sym}(n)$, $f_{k}\to f$ and $B_{k}\to B$. Then $u_\infty$ is a viscosity subsolution (resp. supersolution) of
\[
\partial_{t}u_{\infty}-F(D^2 u_{\infty},x, t)+\langle B(x,t), Du_{\infty}\rangle= f(x,t,u_{\infty}) \quad \text{in} \quad \Omega.
\]
\end{lemma}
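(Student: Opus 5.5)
The plan is the standard viscosity-stability argument: perturb the test function so that it touches $u_\infty$ strictly, transfer the touching to $u_k$ via uniform convergence, apply the inequality for $u_k$, and pass to the limit. I treat the subsolution case; the supersolution case is symmetric (minima instead of maxima, reversed inequalities). I read the convergences $F_k\to F$, $B_k\to B$, $f_k\to f$ as locally uniform, jointly in all variables: in $(\mathrm{M},x,t)$ on compact subsets of $\mathrm{Sym}(n)\times\Omega$ for $F_k$, and in $(x,t,s)$ for $f_k$. This is the natural interpretation, and the argument needs it.

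\emph{Strict touching.} Fix $(x_0,t_0)\in\Omega$ and $\varphi\in C^{2,1}$ such that $u_\infty-\varphi$ has a local maximum at $(x_0,t_0)$. Adding a constant, I may assume $\varphi(x_0,t_0)=u_\infty(x_0,t_0)$. I then replace $\varphi$ by
\[
\tilde\varphi(x,t)=\varphi(x,t)+|x-x_0|^4+(t-t_0)^2 .
\]
This leaves $\tilde\varphi$, $\partial_t\tilde\varphi$, $D\tilde\varphi$ and $D^2\tilde\varphi$ unchanged at $(x_0,t_0)$, and makes the maximum strict. Choose $\rho>0$ with $\overline{Q}:=\overline{B_\rho(x_0)}\times[t_0-\rho^2,t_0+\rho^2]\subset\Omega$. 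Since $\overline{Q}$ is compact and the $\Omega_k$ are increasing open sets covering $\Omega$, we have $\overline{Q}\subset\Omega_k$ for all large $k$. On $\overline{Q}$, $u_\infty-\tilde\varphi$ has a unique maximum at $(x_0,t_0)$.

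\emph{Transfer to $u_k$.} Let $(x_k,t_k)$ be a maximum point of $u_k-\tilde\varphi$ on $\overline{Q}$. Uniform convergence $u_k\to u_\infty$ on $\overline{Q}$ together with strictness gives $(x_k,t_k)\to(x_0,t_0)$: any limit point $(\bar x,\bar t)$ satisfies
\[
(u_\infty-\tilde\varphi)(\bar x,\bar t)\ge (u_\infty-\tilde\varphi)(x_0,t_0),
\]
so $(\bar x,\bar t)=(x_0,t_0)$. Hence for large $k$ the point $(x_k,t_k)$ is interior and is a local maximum of $u_k-\tilde\varphi$. Let $c_k:=u_k(x_k,t_k)-\tilde\varphi(x_k,t_k)\to0$. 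Then $u_k-(\tilde\varphi+c_k)$ has a local maximum at $(x_k,t_k)$, and the test function equals $u_k$ there. The subsolution inequality for $u_k$ gives
\[
\partial_t\tilde\varphi(x_k,t_k)-F_k(D^2\tilde\varphi(x_k,t_k),x_k,t_k)+\langle B_k(x_k,t_k),D\tilde\varphi(x_k,t_k)\rangle\le f_k(x_k,t_k,u_k(x_k,t_k)).
\]

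\emph{Passage to the limit.} This is the only delicate step. The derivatives of $\tilde\varphi$ are continuous, so $D^2\tilde\varphi(x_k,t_k)$ ranges in a compact subset of $\mathrm{Sym}(n)$. Then the decomposition
\[
F_k(\mathrm{M}_k,x_k,t_k)-F(\mathrm{M}_0,x_0,t_0)=\bigl[F_k-F\bigr](\mathrm{M}_k,x_k,t_k)+\bigl[F(\mathrm{M}_k,x_k,t_k)-F(\mathrm{M}_0,x_0,t_0)\bigr]
\]
tends to $0$: the first bracket by uniform convergence on compacts, the second by continuity of $F$. The drift and source terms are handled the same way, using $B_k\to B$, $f_k\to f$ locally uniformly, continuity of $B$ and $f$, and $u_k(x_k,t_k)\to u_\infty(x_0,t_0)=\tilde\varphi(x_0,t_0)$. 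The limit inequality is exactly the viscosity subsolution condition for $u_\infty$ at $(x_0,t_0)$ tested with $\varphi$.

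The main obstacle is this last step. If $F_k\to F$ were only uniform in $\mathrm{Sym}(n)$ for each fixed $(x,t)$, the moving points $(x_k,t_k)$ would break the argument. So I would make the joint local uniform convergence explicit, as in \cite[Theorem 6.1]{CKS2000}.
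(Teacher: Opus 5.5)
Your argument is correct. It proves the lemma as stated, provided the convergences are read as joint locally uniform convergence, and you rightly make that reading explicit.

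It is not, however, the route the paper points to. The paper gives no proof of its own and defers to \cite[Theorem 6.1]{CKS2000}, the stability theorem for $L^p$-viscosity solutions, whose proof never tracks maximum points. That proof argues by contradiction, roughly as follows:
\begin{itemize}
\item If $u_\infty$ failed the subsolution test at a strict maximum of $u_\infty-\varphi$, the operator applied to $\varphi$ (minus $f$) would have a definite wrong sign ($\geq\varepsilon$ a.e.) on a small cylinder.
\item Then $u_k-\varphi$ is a subsolution of an extremal Pucci inequality, with the drift absorbed into a gradient term. Its right-hand side is at most $-\varepsilon$ plus an error that tends to zero in $L^p$.
\item The parabolic ABP (Krylov--Tso) estimate then bounds $\sup(u_k-\varphi)$ by its parabolic-boundary values up to $o(1)$. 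This contradicts strictness as $k\to\infty$.
\end{itemize}

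Your strict-perturbation, moving-maximum-point argument is the elementary $C$-viscosity proof. It is self-contained and adequate here because $F$, $B$ and $f$ are continuous in $(x,t)$, but it genuinely needs joint local uniform convergence. The CKS route costs heavier machinery: ABP for $L^p$-viscosity solutions and $W^{2,1,p}$ test functions. In exchange it allows measurable $(x,t)$-dependence and requires only integral convergence of $F_k(D^2\varphi,\cdot)-F(D^2\varphi,\cdot)$ along each test function.

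That weaker hypothesis is what the compactness argument behind Lemma~\ref{approx} needs. There only the $L^{n+1}$-average of $\Phi_{F_k}$ tends to zero, so your uniform hypothesis is not available. For the same reason, your closing attribution is slightly off: \cite[Theorem 6.1]{CKS2000} does not assume joint uniform convergence; its hypothesis is weaker.
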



\section{Proof of the Theorems \ref{Thm1.1} and \ref{Thm1.2}}

In this section, we investigate the regularity of solutions to \eqref{P} when the source term $f$ depends only on the variables $x$ and $t$. We focus on two distinct scenarios: near vanishing points and when the source term satisfies a non-degeneracy condition. More precisely, we establish Theorems \ref{Thm1.1} and \ref{Thm1.2}.

\subsection{Vanishing points of the source term}

In this first part, we analyze solutions to \eqref{problemwithoutdepofu} from the perspective of vanishing points under H\"{o}lder regularity assumptions. To begin with, we require the following approximation result for viscosity solutions under translations by a vector, whose proof follows in the same lines of \cite[Lemma 2.6]{daSTei17} and \cite[Lemma 5.1]{PT}, and therefore we omit it.

\begin{lemma}[\bf $\mathfrak{F}$-caloric approximation] \label{approx}
Let $\vec{\xi}\in\mathbb{R}^{n}$ and $u$ be a normalized\footnote{A normalized solution means that $\|u\|_{L^{\infty}(Q_1)}\leq 1$.} viscosity solution to \begin{equation*}
\partial_{t}u - F(D^2u,x,t) + \langle B(x,t), Du+\vec{\xi}\rangle = f(x,t) \quad \text{in } Q_1,
\end{equation*}
where $f$ is a continuous function. Assume the structural assumption $(A_1)-(A_2)$ hold. Let $\mathfrak{h}:Q_{\frac{3}{4}}\to \mathbb{R}$ be a viscosity solution to
\begin{equation}\label{solh}
\begin{cases}
\partial_{t}\mathfrak{h}-F(D^{2}\mathfrak{h},0,0)= 0 & \text{in }\quad Q_{\frac{3}{4}},\\
\mathfrak{h} = u & \text{on } \quad \partial_{p} Q_{\frac{3}{4}}.
\end{cases}    
\end{equation}
Given $\varepsilon>0$, there exists $\eta>0$ depending only on $n$, $\lambda$, $\Lambda$ and $\varepsilon$ such that if
\[
\max\left\{\left(\intav{Q_{1}}\Phi_{F}(x,t)^{n+1}\,dxdt\right)^{\frac{1}{n+1}},\|B\|_{L^{\infty}(Q_{1};\mathbb{R}^{n})}(1+|\vec{\xi}|),\|f\|_{L^{\infty}(Q_{1})}\right\}\leq \eta
\]
then,
\[
\sup_{Q_{\frac{3}{4}}}|u-\mathfrak{h}|\leq\varepsilon.
\]
\end{lemma}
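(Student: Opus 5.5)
We argue by contradiction combined with compactness, in the spirit of \cite[Lemma 2.6]{daSTei17} and \cite[Lemma 5.1]{PT}. Suppose the conclusion fails for some $\varepsilon_0>0$. Then there exist sequences $F_k$ (satisfying $(A_1)$ with the same $\lambda,\Lambda$), vectors $\vec{\xi}_k$, drifts $B_k$, sources $f_k$, normalized viscosity solutions $u_k$ of
\[
\partial_t u_k - F_k(D^2u_k,x,t) + \langle B_k, Du_k+\vec{\xi}_k\rangle = f_k \quad\text{in } Q_1,
\]
and solutions $\mathfrak{h}_k$ of \eqref{solh} with operator $F_k(\cdot,0,0)$ and boundary data $u_k$ on $\partial_pQ_{3/4}$. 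These sequences satisfy
\[
\Bigl(\intav{Q_1}\Phi_{F_k}^{n+1}\Bigr)^{\frac{1}{n+1}} + \|B_k\|_{\infty}(1+|\vec{\xi}_k|) + \|f_k\|_\infty \leq \tfrac1k,
\]
but $\sup_{Q_{3/4}}|u_k-\mathfrak{h}_k|>\varepsilon_0$. Note that the drift contribution $\langle B_k,\vec{\xi}_k\rangle$ is a bounded function with $\|\langle B_k,\vec{\xi}_k\rangle\|_\infty\le 1/k$. We can therefore absorb it into the source, setting $\tilde f_k := f_k - \langle B_k,\vec\xi_k\rangle$, so that $\|\tilde f_k\|_\infty\to 0$. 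The translation $\vec\xi$ is exactly why the smallness condition is imposed on $\|B\|(1+|\vec\xi|)$ and not on $\|B\|$ alone.

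\textbf{Compactness.} Since $|\langle B_k,Du\rangle|\le \|B_k\|_\infty|Du|$ with $\|B_k\|_\infty\le 1$, each $u_k$ lies in the class $S(\lambda,\Lambda,\|B_k\|_\infty,\|\tilde f_k\|_\infty)$ of Wang \cite{Wang92}. Uniform interior H\"older estimates then hold, together with H\"older estimates up to the lateral and bottom boundary of $Q_{3/4}$ in terms of the modulus of continuity of $u_k$ on $\partial_pQ_{3/4}$, which is controlled by the interior estimates in a slightly larger cylinder. By Arzel\`a--Ascoli, up to a subsequence, $u_k\to u_\infty$ uniformly on $\overline{Q}_{3/4}$. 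The frozen operators $F_k(\cdot,0,0)$ are uniformly Lipschitz in $\mathrm{Sym}(n)$ with $F_k(\mathrm{O}_n,0,0)=0$, so along a further subsequence $F_k(\cdot,0,0)\to F_\infty$ locally uniformly, and $F_\infty$ is still $(\lambda,\Lambda)$-elliptic.

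\textbf{Limit equation (main obstacle).} We must show that $u_\infty$ solves $\partial_t u_\infty - F_\infty(D^2u_\infty)=0$ in $Q_{3/4}$. The obstacle is that $F_k\to F_\infty$ only in an $L^{n+1}$-averaged sense in $(x,t)$, so Lemma \ref{Est} does not apply directly. We handle it as in \cite[Theorem 6.1]{CKS2000} and Caffarelli's approach. Suppose a paraboloid $\varphi$ touches $u_\infty$ strictly from above at $(x_0,t_0)$ with $\partial_t\varphi - F_\infty(D^2\varphi) =: 2\delta>0$. Perturbing $\varphi$, it also touches $u_k$ from above near $(x_0,t_0)$ in a small cylinder $Q_\rho$. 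There
\[
\partial_t\varphi - F_k(D^2\varphi,x,t) + \langle B_k,D\varphi+\vec\xi_k\rangle - f_k \geq \delta - C\Phi_{F_k}(x,t) - o(1).
\]
Hence $w_k:=u_k-\varphi$ is a subsolution of an equation with right-hand side $g_k$ satisfying $\|g_k^+\|_{L^{n+1}(Q_\rho)}\to0$ in the pieces that matter. The parabolic ABP/Krylov--Tso estimate (which also works in the $L^{n+1}$ setting with bounded drift) then gives $\sup_{Q_\rho} w_k \le \sup_{\partial_p Q_\rho} w_k + C\|(\,\cdot\,)^+\|_{L^{n+1}}$. Since $w_k$ has a strict interior maximum exceeding its parabolic boundary values by a fixed amount, this is a contradiction. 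The supersolution case is symmetric. The uniform convergence $\tilde f_k\to0$, $B_k\to0$ ensures that no spurious lower-order terms survive.

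\textbf{Conclusion.} The limit $\mathfrak{h}_k$ converges uniformly on $\overline{Q}_{3/4}$ (up to a subsequence) to some $\mathfrak{h}_\infty$. This uses the same global H\"older estimates for the frozen equations with equicontinuous boundary data $u_k|_{\partial_p Q_{3/4}}$, plus Lemma \ref{Est} for the constant-coefficient operators. Both $u_\infty$ and $\mathfrak{h}_\infty$ solve the Dirichlet problem $\partial_t v - F_\infty(D^2v)=0$ in $Q_{3/4}$ with $v=u_\infty$ on $\partial_pQ_{3/4}$. By the Comparison Principle (Theorem \ref{CompPrinc}), $u_\infty=\mathfrak{h}_\infty$. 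Therefore $\sup_{Q_{3/4}}|u_k-\mathfrak{h}_k|\to0$, contradicting the choice of $\varepsilon_0$. The resulting $\eta$ depends only on $n,\lambda,\Lambda,\varepsilon$, since only these quantities entered the compactness argument.
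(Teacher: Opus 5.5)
Your proposal is correct and follows essentially the same route as the paper. The paper omits this proof and defers to the contradiction–compactness scheme of \cite{daSTei17} and \cite{PT}, and you carry out that scheme. Your ABP step is the \cite{CKS2000}-type stability argument that is needed because $\Phi_{F_k}\to 0$ only in the $L^{n+1}$-average sense, and your boundary equicontinuity of $\mathfrak{h}_k$ together with the comparison principle correctly handles the prescribed Dirichlet data $\mathfrak{h}=u$ on $\partial_p Q_{3/4}$.
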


\begin{remark}
We recommend \cite{CIL92} and \cite{IS13} to readers as references that ensure the existence of a solution $\mathfrak{h}$ to the problem \eqref{solh}.
\end{remark}
With this approximation tool, under suitable smallness conditions on the initial data, we can approximate solutions of \eqref{problemwithoutdepofu} by an affine profile with decay rate $r^{1+\alpha}$, for $\alpha< \alpha_{\mathrm{F}}$. This is the content of the next result.

\begin{lemma}[\bf Improvement of flatness]\label{Improv}
Let $\vec{\xi}\in\mathbb{R}^{n}$ and $u$ be a normalized viscosity solution to 
\begin{equation*}
\partial_{t}u - F(D^2u,x,t) + \langle B(x,t), Du+\vec{\xi}\rangle = f(x,t) \quad \text{in } Q_1,
\end{equation*}
where $f$ is a continuous function. Assume the structural assumption {$(A_1)-(A_2)$} hold. Given $\widehat{\alpha}\in (0,\alpha_{\mathrm{F}})$, there exist $\eta>0$ and $\rho\in (0,\frac{1}{2}]$, depending only on $n$, $\lambda$, $\Lambda$ and $\alpha_{\mathrm{F}}$ such that, if 
\begin{equation}\label{smallcond}
\max\left\{\left(\intav{Q_{1}}\Phi_{F}(x,t)^{n+1}\,dxdt\right)^{\frac{1}{n+1}},\|B\|_{L^{\infty}(Q_{1};\mathbb{R}^{n})}(1+|\vec{\xi}|),\|f\|_{L^{\infty}(Q_{1})}\right\}\leq \eta 
\end{equation}
then, we can find an affine profile $\ell(x)=\mathfrak{a}+\mathfrak{b}\cdot x$, with the universally bounded coefficients in the following sense
\[
|\mathfrak{a}|+|\mathfrak{b}|\leq \mathrm{C}_{0}(n,\lambda,\Lambda)
\]
such that
\[
\sup_{Q_{\rho}}|u-\ell|\leq \rho^{1+\widehat{\alpha}}.
\]
\end{lemma}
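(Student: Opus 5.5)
We prove the improvement of flatness (Lemma \ref{Improv}) by compactness: we use the $\mathfrak{F}$-caloric approximation of Lemma \ref{approx} together with the interior $C^{1+\alpha_{\mathrm{F}},\frac{1+\alpha_{\mathrm{F}}}{2}}$ regularity of the frozen-coefficient profile. The constant $\rho$ is fixed first, depending only on universal data and on $\widehat{\alpha}$. Then $\varepsilon$ is fixed in terms of $\rho$, and $\eta$ is taken from Lemma \ref{approx} for this $\varepsilon$.

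\textbf{Step 1 (regularity of the tangent profile).} Let $\mathfrak{h}$ solve \eqref{solh}. Since $\partial_t\mathfrak{h}-F(D^2\mathfrak{h},0,0)=0$ has constant coefficients and $\|\mathfrak{h}\|_{L^\infty(Q_{3/4})}\le\|u\|_{L^\infty}\le 1$ by the maximum principle (comparison with constants, Theorem \ref{CompPrinc}), the definition of $\alpha_{\mathrm{F}}$ yields
\[
\|\mathfrak{h}\|_{C^{1+\alpha_{\mathrm{F}},\frac{1+\alpha_{\mathrm{F}}}{2}}(Q_{1/2})}\le \mathrm{C}(n,\lambda,\Lambda).
\]
Set $\ell(x):=\mathfrak{h}(0,0)+D\mathfrak{h}(0,0)\cdot x$, so that $|\mathfrak{a}|+|\mathfrak{b}|\le \mathrm{C}_0(n,\lambda,\Lambda)$. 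The parabolic $C^{1+\alpha_{\mathrm{F}}}$ Taylor estimate, which uses the time seminorm $[\mathfrak{h}]_{1+\alpha_{\mathrm{F}}}$ for the $t$-increment and the gradient H\"older bound for the $x$-increment, gives
\[
\sup_{Q_r}|\mathfrak{h}-\ell|\le \mathrm{C}\, r^{1+\alpha_{\mathrm{F}}}\quad\text{for all } r\le 1/2.
\]

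\textbf{Step 2 (choice of $\rho$).} Since $\widehat{\alpha}<\alpha_{\mathrm{F}}$, choose $\rho\in(0,1/2]$ so small that $\mathrm{C}\rho^{1+\alpha_{\mathrm{F}}}\le \tfrac12\rho^{1+\widehat{\alpha}}$, that is, $\rho^{\alpha_{\mathrm{F}}-\widehat{\alpha}}\le 1/(2\mathrm{C})$. Then set $\varepsilon:=\tfrac12\rho^{1+\widehat{\alpha}}$, and let $\eta=\eta(\varepsilon)$ be given by Lemma \ref{approx}.

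\textbf{Step 3 (conclusion).} Under \eqref{smallcond}, Lemma \ref{approx} gives $\sup_{Q_{3/4}}|u-\mathfrak{h}|\le\varepsilon$. The triangle inequality on $Q_\rho\subset Q_{3/4}$ then gives
\[
\sup_{Q_\rho}|u-\ell|\le \varepsilon+\mathrm{C}\rho^{1+\alpha_{\mathrm{F}}}\le \rho^{1+\widehat{\alpha}}.
\]

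The main obstacle is Step 1. We must make sure that the boundary-value problem \eqref{solh} yields an interior estimate whose constant does not depend on $u$. This holds because only $L^\infty$ control is needed for interior estimates, and that control comes from comparison. We must also use the correct parabolic Taylor expansion: no $t$-linear term is needed, since $[\mathfrak{h}]_{1+\alpha_{\mathrm{F}},\overline{Q}}$ controls the increments $|\mathfrak{h}(x,t)-\mathfrak{h}(x,0)|\lesssim|t|^{\frac{1+\alpha_{\mathrm{F}}}{2}}\le r^{1+\alpha_{\mathrm{F}}}$. Finally, the dependence of $\rho$ and $\eta$ on $\widehat{\alpha}$ is implicit and universal, as the statement allows.
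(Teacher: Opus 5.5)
Your proposal is correct and follows essentially the same route as the paper. Both arguments approximate $u$ by the frozen-coefficient profile $\mathfrak{h}$ via Lemma \ref{approx}, use the interior $C^{1+\alpha_{\mathrm{F}},\frac{1+\alpha_{\mathrm{F}}}{2}}$ Taylor bound for $\ell(x)=\mathfrak{h}(0,0)+D\mathfrak{h}(0,0)\cdot x$, fix $\rho$ through $\mathrm{C}\rho^{\alpha_{\mathrm{F}}-\widehat{\alpha}}\le\tfrac12$, take $\varepsilon=\tfrac12\rho^{1+\widehat{\alpha}}$, and conclude by the triangle inequality. Your explicit justification of the universal bound $\|\mathfrak{h}\|_{L^\infty}\le 1$, by comparison with constants, fills in a detail that the paper leaves implicit.
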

\begin{proof}
Let us fix $\varepsilon>0$, to be chosen later. Applying the $F$-caloric approximation lemma and considering  a viscosity solution $\mathfrak{h}$  to
\begin{equation*}
\begin{cases}
\partial_{t}\mathfrak{h}-F(D^{2}\mathfrak{h},0,0)= 0 & \text{in }\quad Q_{\frac{3}{4}},\\
\mathfrak{h} = u & \text{on } \quad \partial_{p} Q_{\frac{3}{4}},
\end{cases}    
\end{equation*}
we obtain the existence of $\eta>0$ such that, if \eqref{smallcond} holds for such an $\eta$, then
\begin{equation}\label{est1lemma4.3}
\sup_{Q_{\frac{3}{4}}}|u-\mathfrak{h}|<\varepsilon.
\end{equation}
By the $C^{1+\alpha,\frac{1+\alpha}{2}}$ estimates (cf. \cite{Wang92}), it follows that $\mathfrak{h}\in C^{1+\alpha_{\mathrm{F}},\frac{1+\alpha_{\mathrm{F}}}{2}}(Q_{\frac{1}{2}})$ and
\begin{equation}\label{est2lemma4.3}
\|\mathfrak{h}\|_{C^{1+\alpha_{\mathrm{F}},\frac{1+\alpha_{\mathrm{F}}}{2}}(Q_{\frac{1}{2}})}\leq \mathrm{C}(n,\lambda,\Lambda).
\end{equation}
Now define $\mathfrak{a}=\mathfrak{h}(0,0)$ and $\mathfrak{b}=D\mathfrak{h}(0,0)$. In this case, the affine profile $\ell(x)=\mathfrak{a}+\mathfrak{b}\cdot x$ is well defined and, by \eqref{est2lemma4.3}, we have for any $r\in (0,\tfrac{1}{2})$,
\begin{eqnarray}
\sup_{(x,t)\in Q_{r}}|\mathfrak{h}(x,t)-\ell(x)|
&\leq&
\sup_{(x,t)\in Q_{r}}|\mathfrak{h}(x,t)-\mathfrak{h}(0,t)-D\mathfrak{h}(0,t)\cdot x|
+\sup_{t\in (-r^{2},0]}|\mathfrak{h}(0,t)-\mathfrak{a}|\nonumber\\
&+&
\sup_{t\in (-r^{2},0]}|D\mathfrak{h}(0,t)-\mathfrak{b}|\sup_{x\in B_{r}}|x|\nonumber\\
&\leq&
\Big([D\mathfrak{h}]_{\alpha_{\mathrm{F}},Q_{\frac{1}{2}}}
+[\mathfrak{h}]_{1+\alpha_{\mathrm{F}},Q_{\frac{1}{2}}}
+[D\mathfrak{h}]_{\alpha_{\mathrm{F}},Q_{\frac{1}{2}}}\Big)r^{1+\alpha_{\mathrm{F}}}\nonumber\\
&\leq&
\mathrm{C}_{0}\,r^{1+\alpha_{\mathrm{F}}}.\label{est3lemma4.3}
\end{eqnarray}
Choosing $\varepsilon>0$ and $\rho$ such that
\begin{equation*}
\rho=\min\left\{\frac{1}{2},\left(\frac{1}{2\mathrm{C}_{0}}\right)^{\frac{1}{\alpha_{\mathrm{F}}-\widehat{\alpha}}}\right\}
\quad \text{and}\quad
\varepsilon=\frac{1}{2}\rho^{1+\widehat{\alpha}},
\end{equation*}
 the constant $\eta>0$ gets established and, using the estimates \eqref{est1lemma4.3} and \eqref{est3lemma4.3}, it follows from the triangle inequality that
\begin{equation}
\sup_{Q_{\rho}}|u-\ell|\leq \rho^{1+\widehat\alpha}. 
\end{equation}
Finally, estimate \eqref{est2lemma4.3} yields the universal boundedness of the coefficients $\mathfrak{a}$ and $\mathfrak{b}$ of the affine function $\ell$, completing the proof.
\end{proof}

\begin{remark}[\bf Normalization and scaling]\label{normescal}
For the proof of Theorem~\ref{Thm1.1}, we first observe that, by a normalization and scaling argument, we may assume that $u$ satisfies the assumptions of Lemma~\ref{Improv} for $\vec{\xi}=0$. Indeed, let $u$ be a viscosity solution to \eqref{problemwithoutdepofu} in $Q_{1}$. Fix $\widehat{\alpha}\in(0,\alpha_{\mathrm{F}})$ and consider $r\in(0,1)$, and $\mathcal{K}>0$ constants (to be chosen later). Define
\[
u_{r}(x,t):=\frac{u(rx,r^{2}t)}{\mathcal{K} r^{1+\widehat{\alpha}}},
\qquad (x,t)\in Q_{1}.
\]
Then, $u_r$ is a viscosity solution in $Q_1$ of
\[
\partial_{t}u_r - F_r(D^2u_r,x,t)+\langle B_r(x,t),Du_r\rangle=f_r(x,t),
\]
where
\[
F_r(M,x,t):=\frac{1}{\mathcal{K} r^{\widehat{\alpha}-1}}\,F\big(\mathcal{K} r^{\widehat{\alpha}-1}M,rx,r^{2}t\big),
\qquad
B_r(x,t):=r\,B(rx,r^{2}t),
\]
and
\[
f_r(x,t):=\frac{r^{1-\widehat{\alpha}}}{\mathcal{K}}\,f(rx,r^{2}t).
\]
Moreover, if $F$ satisfies {\rm ($A_1$)}, with constants $\lambda$ and $\Lambda$, then $F_r$
also satisfies {\rm ($A_1$)} with the same constants. In addition, one has the scaling properties
\[
\left(\intav{Q_1}\Phi_{F_r}(x,t)^{n+1}\,dxdt\right)^{\frac{1}{n+1}}
\leq(1+\mathcal{K} r^{\widehat{\alpha}-1})
\left(\intav{Q_r}\Phi_F(x,t)^{n+1}\,dxdt\right)^{\frac{1}{n+1}}.
\]
Furthermore,
\[
\|B_r\|_{L^\infty(Q_1;\mathbb{R}^n)}\le r\|B\|_{L^\infty(Q_1;\mathbb{R}^n)}.
\]
Since $f$ satisfies \eqref{condhol}, then $f_r$ also satisfies the same property with $\mathrm{C}_{f_r}:= \frac{r^{1-\widehat\alpha +\theta}}{\mathcal{\mathcal{K}}}\mathrm{C}_f$, and
\[
\|f_r\|_{L^\infty(Q_1)} \le \frac{\mathrm{C}_f}{\mathcal{K}}\,r^{1-\widehat{\alpha}+\theta}.
\]
In particular, choosing
\[
\mathcal{K}:=2\|u\|_{L^\infty(Q_1)}+1+\mathrm{C}_{f}
\]
and taking $r_0\in(0,1)$, sufficiently small such that
\[
r_0\|B\|_{L^\infty(Q_1;\mathbb{R}^n)}\le \frac{\eta}{1+2\mathrm{C}_{0}},
\qquad
r_0^{1-\widehat{\alpha}+\theta}\le \eta,
\qquad\text{and}\qquad
(1+\mathcal{K} r_0^{\widehat{\alpha}-1})\,\eta_0\le \eta,
\]
where $\mathrm{C}_{0}$ is the constant from Lemma \ref{Improv}. Thus, for every $0<r\le r_0$, we have that
\[
\max\left\{
\left(\intav{Q_1}\Phi_{F_r}(x,t)^{n+1}\,dxdt\right)^{\frac{1}{n+1}},
(1+2\mathrm{C}_{0})\|B_r\|_{L^\infty(Q_1;\mathbb{R}^n)},
\|f_r\|_{L^\infty(Q_1)}, \mathrm{C}_{f_r}
\right\}
\le \eta,
\]
provided that
\[
\left(\intav{Q_r}\Phi_F(x,t)^{n+1}\,dxdt\right)^{\frac{1}{n+1}}
\le \eta_0
\qquad\text{and}\qquad
\eta_0:=\frac{\eta}{1+\mathcal{K} r_0^{\widehat{\alpha}-1}}.
\]
Therefore, under the smallness condition above, the rescaled function $u_r$ satisfies the assumptions of Lemma~\ref{Improv}.
\end{remark}
We are now in a position to prove Theorem \ref{Thm1.1}.
 
\begin{proof}[\bf Proof of Theorem \ref{Thm1.1}]
By the normalization argument discussed in  Remark \ref{normescal}, we may assume that $u$ is normalized and satisfies the hypotheses of Lemma~\ref{Improv}. More precisely,
\[
\sup_{Q_1}|u|\leq 1,
\mbox{ and }
\max\left\{
\left(\int_{Q_1}\Phi_F(x,t)^{n+1}\,dxdt\right)^{\frac{1}{n+1}},
(1+2\mathrm{C}_{0})\|B\|_{L^\infty(Q_1;\mathbb{R}^n)},
\|f\|_{L^\infty(Q_1)}, \mathrm{C}_{f}
\right\}
\leq \eta.
\]

Let $\rho\in(0,\frac12]$ be the universal constant provided by Lemma~\ref{Improv}. We shall prove, by induction, that there exists a sequence of affine functions
\[
\ell_k(x):=a_k+b_k\cdot x,
\]
such that
\begin{equation}\label{eq:induction-flatness}
\sup_{Q_{\rho^k}}|u-\ell_k|
\leq
\rho^{k(1+\alpha^\sharp)},
\end{equation}
and
\begin{equation}\label{eq:increment-coeff}
|a_{k+1}-a_k|+\rho^k|b_{k+1}-b_k|
\leq C\rho^{k(1+\alpha^\sharp)},
\end{equation}
for a universal constant $C>0$.
For the base induction case $k=1$, set
$
\ell_0\equiv 0$. By the reduction described above, we have that $u$ satisfies the assumptions of Lemma \ref{Improv} with $\widehat{\alpha}=\alpha^{\sharp}$ and $\vec{\xi}=0$, therefore there exists an affine function $\ell_{1}(x)=a_{1}+b_{1}\cdot x$ such that
\[
\sup_{Q_\rho}|u-\ell_1|\leq \rho^{1+\alpha^{\sharp}},
\]
where 
\begin{equation}\label{conddea1b1}
|a_{1}|+|b_{1}|\leq \mathrm{C}_{0}.
\end{equation} Therefore, \eqref{eq:induction-flatness} holds for $k=1$, while \eqref{eq:increment-coeff} follows from the definition of $\ell_{0}$ together with \eqref{conddea1b1}, which proves \eqref{eq:induction-flatness} for $k=0$.
Assume now that \eqref{eq:induction-flatness} holds for some $k\geq 1$. Define the rescaled function
\[
v_k(x,t)
:=
\frac{u(\rho^k x,\rho^{2k}t)-\ell_k(\rho^k x)}
{\rho^{k(1+\alpha^\sharp)}},
\qquad \mbox{ for all } (x,t)\in Q_1.
\]
By the induction hypothesis,
\[
\sup_{Q_1}|v_k|
\leq 1.
\]
A direct computation shows that $v_k$ satisfies
\[
\partial_{t}v_k-F_k(D^2v_k,x,t)
+\langle B_k(x,t),Dv_k+\vec{\xi}_{k}\rangle
=
f_k(x,t)
\quad\text{in}\quad Q_1,
\]
where
\[
F_k(M,x,t)
=
\rho^{-k(\alpha^\sharp-1)}
F\!\left(
\rho^{k(\alpha^\sharp-1)}M,
\rho^k x,
\rho^{2k}t
\right),
\]
\[
B_k(x,t)
=
\rho^k B(\rho^k x,\rho^{2k}t), \ \vec{\xi}_{k}=\rho^{-k\alpha^{\sharp}}b_{k},\
\mbox{
and }
\
f_k(x,t)
=
\rho^{k(1-\alpha^\sharp)}
f(\rho^k x,\rho^{2k}t).
\]
Moreover, since
\[
|f(x,t)|
\leq
\mathrm{C}_f\,d_p((x,t),(0,0))^{\theta}, 
\]
we obtain
\[
\|f_k\|_{L^\infty(Q_1)}
\leq
\mathrm{C}_f\,\rho^{k(1-\alpha^\sharp+\theta)}.
\]
Provided that 
$
1-\alpha^\sharp+\theta>0,
$
and $\mathrm{C}_{f}
\leq \eta$, we get
$
\|f_k\|_{L^\infty(Q_1)}
\leq \eta
$.
Observe that the induction hypothesis ensures that
\[
|b_{k}|\leq \sum_{j=0}^{k-1}|b_{j+1}-b_{j}|\leq \mathrm{C}_{0}\sum_{j=0}^{k-1}\rho^{j\alpha^{\sharp}}\leq \frac{\mathrm{C}_{0}}{1-\rho^{\alpha^{\sharp}}}\leq 2\mathrm{C}_{0},
\]
since  $\rho\leq \frac{1}{2}$. Consequently,
\[
\|B_k\|_{L^\infty(Q_1;\mathbb{R}^n)}(1+|\vec{\xi}_{k}|)
\leq
\|B\|_{L^\infty(Q_1;\mathbb{R}^n)}\left(\rho^{k}+\rho^{k(1-\alpha^{\sharp})}\frac{\mathrm{C}_{0}}{1-\rho^{\alpha^{\sharp}}}\right)
\leq (1+2\mathrm{C}_{0})\|B\|_{L^\infty(Q_1;\mathbb{R}^n)}\leq\eta,
\]
and, by the scaling properties established previously,
\[
\left(
\int_{Q_1}
\Phi_{F_k}(x,t)^{n+1}
\,dxdt
\right)^{\frac{1}{n+1}}
\leq \eta.
\]
Hence, $v_k$ satisfies the assumptions of Lemma~\ref{Improv}, which provides an affine function
\[
\widetilde{\ell}_k(x)
=
\widetilde{a}_k+\widetilde{b}_k\cdot x,
\mbox{ with } \
|\widetilde{a}_k|+|\widetilde{b}_k|
\leq \mathrm{C}_0,
\]
such that
\[
\sup_{Q_\rho}
|v_k-\widetilde{\ell}_k|
\leq
\rho^{1+\alpha^\sharp}.
\]

Then, we may define
\[
\ell_{k+1}(x)
:=
\ell_k(x)
+
\rho^{k(1+\alpha^\sharp)}
\widetilde{\ell}_k(\rho^{-k}x).
\]
Equivalently,
\[
a_{k+1}
=
a_k+\rho^{k(1+\alpha^\sharp)}\widetilde{a}_k,
\mbox{ 
and } \
b_{k+1}
=
b_k+\rho^{k\alpha^\sharp}\widetilde{b}_k.
\]
Since $|\widetilde{a}_k|+|\widetilde{b}_k|\leq \mathrm{C}_0$, we obtain
\[
|a_{k+1}-a_k|
\leq
\mathrm{C}_0\rho^{k(1+\alpha^\sharp)},
\mbox{ 
and \
}
|b_{k+1}-b_k|
\leq
\mathrm{C}_0\rho^{k\alpha^\sharp},
\]
which yields \eqref{eq:increment-coeff}.
Furthermore, if $(x,t)\in Q_\rho$, then
\[
v_k(x,t)-\widetilde{\ell}_k(x)
=
\frac{
u(\rho^k x,\rho^{2k}t)
-
\ell_{k+1}(\rho^k x)
}
{\rho^{k(1+\alpha^\sharp)}},
\]
and therefore,
\[
\sup_{Q_{\rho^{k+1}}}
|u-\ell_{k+1}|
\leq
\rho^{(k+1)(1+\alpha^\sharp)}.
\]
Hence \eqref{eq:induction-flatness} holds for $k+1$, completing the induction.
Further, from \eqref{eq:increment-coeff},
\[
\sum_{k=0}^{\infty}
|a_{k+1}-a_k|
<+\infty,
\mbox{
and } \
\sum_{k=0}^{\infty}
|b_{k+1}-b_k|
<+\infty.
\]
Therefore, there exist
$
a_\infty\in\mathbb{R},
\
b_\infty\in\mathbb{R}^n,
$
such that
$
a_k\to a_\infty, $ in $\mathbb{R}$ and $
b_k\to b_\infty
$ in $\mathbb{R}^n$.
Since
\[
|u(0,0)-a_k|
=
|u(0,0)-\ell_k(0)|
\leq
\rho^{k(1+\alpha^\sharp)},
\]
we conclude that
$a_\infty=u(0,0).$

Finally, let $0<r<\rho$, and choose $k\in\mathbb{N}$ such that
$\rho^{k+1}<r\leq \rho^k.$
Using \eqref{eq:induction-flatness}, we obtain
\[
\sup_{Q_r}|u-\ell_k|
\leq
\rho^{k(1+\alpha^\sharp)}
\leq
\frac{1}{\rho} r^{1+\alpha^\sharp}.
\]
Moreover, from \eqref{eq:increment-coeff} we deduce
\[
|b_\infty-b_k|
\leq
\sum_{j=k}^{\infty}|b_{j+1}-b_j|
\leq
\mathrm{C}_{0}\sum_{j=k}^{\infty}\rho^{j\alpha^\sharp}
\leq
\frac{\mathrm{C}_{0}}{1-\rho^{\alpha^{\sharp}}}\rho^{k\alpha^\sharp},
\]
and similarly,
\[
|a_\infty-a_k|
\leq
\frac{\mathrm{C}_{0}}{1-\rho^{1+\alpha^{\sharp}}}\rho^{k(1+\alpha^\sharp)}.
\]Consequently, using the triangular inequality,
\[
\sup_{Q_r}
|\ell_k(x)-u(0,0)-b_\infty\cdot x|
\leq \frac{2\mathrm{C}_0
}{1-\rho^{1+\alpha^{\sharp}}}r^{1+\alpha^\sharp}.
\]
Combining the previous two estimates yields
\[
\sup_{Q_r}
|u(x,t)-u(0,0)-b_\infty\cdot x|
\leq \left[\frac{1}{\rho}+\frac{2\mathrm{C}_0
}{1-\rho^{1+\alpha^{\sharp}}} \right] r^{1+\alpha^\sharp} =:
\mathrm{C}r^{1+\alpha^\sharp}.
\]
In particular, taking $t=0$ and $r\to 0$, we conclude that $u$ is differentiable at the origin, and $D(0,0) = b_\infty.$ Thus, we conclude that
$
u$ is $
C^{1+\alpha^\sharp,\frac{1+\alpha^\sharp}{2}}
$ at the origin,
and there exists a constant $\mathrm{C}>0$, depending only on
$
n,\lambda,\Lambda,\theta,\alpha_{\mathrm{F}},\|B\|_{L^\infty(Q_1;\mathbb{R}^{n})},$ and $
\mathrm{C}_f,
$
such that
\[
\sup_{(x,t)\in Q_r}
|u(x,t)-u(0,0)-Du(0,0)\cdot x|
\leq
\mathrm{C}r^{1+\alpha^\sharp},
\qquad
0<r\leq r_0.
\]
This ends the proof.
\end{proof}

\subsection{A non-degeneracy property at extremal points}

We conclude this section with the proof of Theorem \ref{Thm1.2}.

\begin{proof}[\bf Proof of Theorem \ref{Thm1.2}]
Let $(x_{0},t_{0})\in Q_{1}$ be  a local extremum point. Without loss of generality, we may assume that $(x_{0},t_{0})=(0,0)$ and that it is a local minimum point. Since the right-hand side of \eqref{problemwithoutdepofu} does not depend on $u$, we may further assume that $u(0,0)>0$. We now introduce the following comparison (barrier) function:
\begin{equation*}
v(x,t)=\mathrm{C}\left[|x|^{1+\sigma}+(-t)^{\frac{1+\sigma}{2}}\right],
\end{equation*}
where $\sigma\in(1,2]$ and $\mathrm{C}>0$ will be chosen \textit{a posteriori} so that $v$ is a viscosity solution to
\begin{equation}\label{est1Theorem1.2}
G[v]:=\partial_{t}v-F(D^{2}v,x,t)+\langle B(x,t),Dv\rangle\geq -\mathrm{c}_f d_p((x,t),(0,0))^\theta \quad \text{in}\quad Q_{r}
\end{equation}
for all $r>0$ such that $Q_{r}\subset\subset Q_{1}$. To this end, note that $v$ satisfies:
\begin{itemize}
\item[] $\partial_{t}v=-\mathrm{C}\left(\frac{1+\sigma}{2}\right)(-t)^{\frac{\sigma-1}{2}}$,
\quad \(D v = \mathrm{C}(1+\sigma)|x|^{\sigma-1}x\), { and}
\item[] \(D^{2} v = \mathrm{C}(1+\sigma)|x|^{\sigma-1}\left(\mathrm{Id}_n - (1-\sigma)\frac{x}{|x|} \otimes \frac{x}{|x|}\right)\),
\quad \(\mathcal{M}^{+}_{\lambda,\Lambda}(D^{2}v)=\Lambda\mathrm{C}(1+\sigma)(n-1+\sigma)|x|^{\sigma-1}\).
\end{itemize}
Under this context, by the uniform ellipcity {$(A_1)$}, we can conclude that
\begin{eqnarray}
F(D^{2}v,x,t)-\langle B(x,t),Dv\rangle&\leq& \mathcal{M}^{+}_{\lambda}(D^{2}v)+\|B\|_{L^{\infty}(Q_{1};\mathbb{R}^{n})}|Dv| \nonumber\\
&\leq&\mathrm{C}(1+\sigma)[\Lambda(n-1+\sigma)+\|B\|_{L^{\infty}(Q_{1};\mathbb{R}^{n})}]|x|^{\sigma-1}. \label{est2Theorem1.2}
\end{eqnarray}
Now, chosing $
\sigma=1+\theta, $
 and using the expression for $\partial_tv$ together with \eqref{est2Theorem1.2}, we conclude that
\begin{equation}\label{est3Theorem1.2}
G[v]\geq -\mathrm{C}(2+\theta)\left(\frac{1}{2}+\Lambda(n+\theta)+\|B\|_{L^{\infty}(Q_{1};\mathbb{R}^{n})}\right)d_{p}((x,t),(0,0))^{\theta}.
\end{equation}
Thus, by choosing $\mathrm{C}$ such that
\begin{equation*}
0<\mathrm{C}\leq \frac{\mathrm{c}_{f}}{(2+\theta)\left(\frac{1}{2}+\Lambda(n+\theta)+\|B\|_{L^{\infty}(Q_{1};\mathbb{R}^{n})}\right)},
\end{equation*}
in view of \eqref{est3Theorem1.2} and assumption \eqref{coninfdhol}, we obtain
\begin{equation*}
G[v]\geq -\mathrm{c}_{f}d_{p}((x,t),(0,0))^{\theta}\geq f(x,t). 
\end{equation*}
Therefore, $v$ is a supersolution to \eqref{problemwithoutdepofu}. 

On the other hand, for $\varepsilon>0$, we define the auxiliary function
\[
u_{\varepsilon}(x,t):=u(x,t)-u(0,0)+\varepsilon.
\]
Clearly, $u_{\varepsilon}$ is a viscosity solution to \eqref{problemwithoutdepofu} and $u_{\varepsilon}(0,0)=\varepsilon$. We claim that there exists a point $(x_{r},t_{r})\in \partial_{p}Q_{r}$ such that
\[
u_{\varepsilon}(x_{r},t_{r})\geq v(x_{r},t_{r}).
\]
Indeed, suppose by contradiction that $u_{\varepsilon}<v$ on $\partial_{p}Q_{r}$. In this case, since $v$ is a viscosity supersolution in $Q_{r}$ and $u_{\varepsilon}$ is a viscosity subsolution in $Q_{r}$, the Comparison Principle,   Theorem \ref{CompPrinc}, yields
\[
u_{\varepsilon}(x,t)\leq v(x,t)\quad \text{for any}\quad(x,t)\in Q_{r}.
\]
In particular, for $(x,t)=(0,0)$ we have 
$
\varepsilon=u_{\varepsilon}(0,0)\leq v(0,0)=0,
$
which is a contradiction. This proves the claim. With this fact in hand, we know that
\begin{equation}\label{est4Theorem1.2}
\sup_{\partial_{p}Q_{r}}u_{\varepsilon}\geq v(x_{r},t_{r})=\mathrm{C}[|x_{r}|^{2+\theta}+(-t_{r})^{\frac{2+\theta}{2}}].
\end{equation}
However, by the definition of the parabolic boundary, there are two cases to analyze for $(x_{r},t_{r})$:
\begin{itemize}
\item[I.]  In this case, $(x_{r},t_{r})\in \partial B_{r}\times[-r^{2},0)$, then $|x_{r}|=r$ and since $(-t_{r})^{\frac{2+\theta}{2}}\geq 0$, it follows that
\[
v(x_{r},t_{r})
=\mathrm{C}\Big(r^{2+\theta}+(-t_{r})^{\frac{2+\theta}{2}}\Big)
\geq \mathrm{C}r^{2+\theta}.
\]

\item[II.] In this case, $(x_{r},t_{r})\in B_{r}\times\{-r^{2}\}$, then $t_{r}=-r^{2}$ and since $|x_{r}|^{2+\theta}\geq 0$, thus,
\[
v(x_{r},t_{r})
=\mathrm{C}\Big(|x_{r}|^{2+\theta}+(-(-r^{2}))^{\frac{2+\theta}{2}}\Big)
\geq \mathrm{C}r^{2+\theta}.
\]

\end{itemize}
In view of these two cases and \eqref{est4Theorem1.2}, we conclude that
\begin{equation*}
\sup_{\partial_{p}Q_{r}}|u(x,t)-u(0,0)|= \sup_{\partial_{p}Q_{r}}(u(x,t)-u(0,0))= \sup_{\partial_{p}Q_{r}}u_{\varepsilon}-\varepsilon\geq \mathrm{C}r^{2+\theta}-\varepsilon,
\end{equation*}
letting $\varepsilon\to 0$ we conclude the desired.
\end{proof}


\section{Semilinear models: Proof of the Theorems \ref{Thm1.3} and \ref{Thm1.4}}

In this final section, we focus on the results concerning the semilinear model \eqref{problem2}. We begin by presenting the proof of Theorem \ref{Thm1.3}.

\begin{proof}[\bf Proof of Theorem \ref{Thm1.3}]
By the structural assumptions on $F$ and the standard parabolic regularity, we have
\begin{equation}\label{eq:bound}
\|u\|_{C^{2+\alpha_0,\frac{2+\alpha_0}{2}}(Q_{1/2})} \leq \mathrm{C}_0,
\end{equation}
for each $u\in \mathcal{G}$. Without loss of generality, we assume again $(x_0,t_0) = (0,0)$, hence, it suffices to prove
\[
|u(x,t)| \leq \mathrm{C}d_p((x,t)(0,0))^{\beta},
\]
for $\beta$ given in \eqref{beta}. We argue by contradiction. Assuming that this estimate fails, there would exist a sequence
$u_j \in \mathcal{G}$, and points $(x_j,t_j) \to (0,0)$ such that
\begin{equation}\label{eq:contradiction}
|u_j(x_j,t_j)| > j d_p((x_j,t_j),(0,0))^{\beta}.
\end{equation}
In addition, setting the oscillation quantity
\[
\phi_j(\rho) := \sup_{\rho<r<1/2} r^{2+\alpha_0-\beta}[u_j]_{C^{2+\alpha_0,\frac{2+\alpha_0}{2}}(Q_{r})}, \]
we claim that, it should yield 
\(
\displaystyle\lim_{\rho \to 0} \phi_j(\rho) = + \infty,
\)
and thus, there would exist a sequence $\{r_j\}$ of positive numbers, such that $r_j \to 0$ as $j\to \infty.$ 

In fact,
by definition,
\[
\lim_{\rho \to 0} \phi_j(\rho)
= \sup_{0<r<1/2} r^{2+\alpha_0-\beta} [u_j]_{C^{2+\alpha_0,\frac{2+\alpha_0}{2}}(Q_r)}.
\]
Since $(0,0)\in \Gamma(u_j)$, it follows that
\begin{align}
|u_{j}(x_{j},t_{j})|&\leq\left|u_{j}(x_{j},t_{j})-u_{j}(0,t_{j})-Du_{j}(0,t_{j})\cdot x_{j}-\frac{1}{2}x_{j}^{T}D^{2}u_{j}(0,t_{j})x_{j}\right|\nonumber\\
&+|Du_{j}(0,t_{j})-Du_{j}(0,0)||x_{j}|
+\frac{1}{2}|x_{j}^{T}(D^{2}u_{j}(0,t_{j})-D^{2}u_{j}(0,0))x_{j}|\nonumber\\
&+|u_{j}(0,t_{j})-u_{j}(0,0)-\partial_{t} u_{j}(0,0)t_{j}|\nonumber\\
&\leq[D^{2}u_{j}]_{\alpha_{0},\overline{Q^{+}_{r_j}}}|x_{j}|^{2}(|x_{j}|^{\alpha_{0}}+|t_{j}|^{\frac{\alpha_{0}}{2}})+[\partial_{t}u_{j}]_{\alpha_{0},\overline{Q^{+}_{r_j}}}|t_{j}|^{1+\frac{\alpha_{0}}{2}}+[Du_{j}]_{1+\alpha_{0},\overline{Q^{+}_{r_j}}}|x_{j}||t_{j}|^{\frac{1+\alpha_{0}}{2}}\nonumber\\
&\leq[u_{j}]_{C^{2+\alpha_{0},\frac{2+\alpha_{0}}{2}}(Q_{r_j})}d_{p}((x_{j},t_{j}),(0,0))^{2+\alpha_{0}},\label{estwithuj}
\end{align}
where $r_j=d_{p}((x_{j},t_{j}),(0,0))$. From \eqref{eq:contradiction} and \eqref{estwithuj}, 
\begin{align}\label{eq:contradiction2}
j < \frac{|u_j(x_j,t_{j})|}{d_p((x_j,t_j),(0,0))^{\beta}}
&\leq \frac{[u_j]_{C^{2+\alpha_0,\frac{2+\alpha_0}{2}}(Q_{r_j})} d_p((x_j,t_j),(0,0))^{2+\alpha_0} }{d_p((x_j,t_j),(0,0))^{\beta}}
\nonumber\\
&\leq \phi_j(d_p((x_j,t_j),(0,0))/2)
\leq \lim_{\rho \to 0} \phi_j(\rho).
\end{align}
Hence, \(
\displaystyle \lim_{\rho \to 0} \phi_j(\rho) = + \infty.
\) For $j \geq 3$, there exists $r_j \in (1/j,1/2]$ such that
\begin{equation}\label{eq:wj}
r_j^{2+ \alpha_0-\beta} [u_j]_{C^{2+\alpha_0,\frac{2+\alpha_0}{2}}(Q_{r_j})}
\geq \frac{1}{2}\phi_j(1/j)
\geq \frac{1}{2}\phi_j(r_j).
\end{equation}
Therefore, from \eqref{eq:bound} we deduce
\[
r_j^{\beta-(2+\alpha_0)}
\leq \frac{2[u_j]_{C^{2+\alpha_0,\frac{2+\alpha_0}{2}}(Q_{r_j})}}{\phi_j(1/j)}
\leq \frac{2\mathrm{C}_0}{\phi_j(1/j)}
\to 0 \quad \text{as } j \to \infty.
\]
As $2+\alpha_0 < \beta$, it follows that $r_j \to 0$.

In addition, defining the blow-up sequence
\[
w_j(x,t) := \frac{u_j(r_j x,r_j^2t)}{r_j^{\beta}\phi_j(r_j)},
\qquad x \in Q_{1/2r_j},
\]
we obtain, $w_j(0,0) = |Dw_j(0,0)| = |D^2w_j(0,0)|= |\partial_{t}w_j(0,0)| = 0$, and in view of \eqref{eq:wj}
\begin{equation}\label{eq:wjnontrivial}
[w_j]_{C^{2+\alpha_0,\frac{2+\alpha_0}{2}}(Q_{r_j})} \geq \frac{1}{2}\frac{1}{r_j^{2+\alpha_0}} > 1.
\end{equation}
Fixing $r>1$, there exists $j_{0}\in\mathbb{N}$ such that $Q_{r}\subsetneq Q_{\frac{1}{2r_{j}}}$ for all $j\geq j_{0}$. In this case, arguing as in \eqref{estwithuj} and using that $(0,0)\in \Gamma(u_{j})$, we have for any $(x,t)\in Q_{r}$ that
\[
|w_{j}(x,t)|\leq \frac{[u_{j}]_{C^{2+\alpha_{0},\frac{2+\alpha_{0}}{2}}(Q_{r_jr})}(r_{j}r)^{2+\alpha_{0}-\beta}}{\phi_{j}(r_{j})}r^{\beta}\leq r^{\beta},
\]
since $r_{j}r\in (r_{j},1/2)$. Hence, $
\|w_{j}\|_{L^{\infty}(Q_{r})}\leq r^{\beta}.
$
Similarly, it is possible to check that
\begin{align*}
&\|Dw_{j}\|_{L^{\infty}(Q_{r})}\leq r^{\beta-1},\quad \|D^2w_{j}\|_{L^{\infty}(Q_{r})}\leq r^{\beta-2},\quad \\
&\|\partial_{t} w_{j}\|_{L^{\infty}(Q_{r})}\leq r^{\beta-2}
\quad\text{and}\quad[w_{j}]_{C^{2+\alpha_{0},\frac{2+\alpha_{0}}{2}}(Q_{r})}\leq r^{\beta-(2+\alpha_{0})}    
\end{align*}
Thus, $\|w_{j}\|_{C^{2+\alpha_{0},\frac{2+\alpha_{0}}{2}}(Q_{r})}\leq 5r^{\beta}$, since $r>1$. In addition, $w_j$ solves, in the viscosity sense,
\[
F_j(D^2 w_j, x,t)-\partial_{t}w_j
+ \langle  B_j(x,t), Dw_j \rangle
= \phi_j(r_j)^{\gamma-1} \Big( (w_j^+)^\gamma - (w_j^-)^\gamma \Big) \ \mbox{ in }  \ Q_{1/2r_j},
\]
 where
\[
F_j(M,x,t) := r_j^{2-\beta} F(r_j^{\beta-2} M, r_j x, r_j^2 t) \mbox{ and } B_j(x,t) := r_jB(r_jx,r_j^2t).
\]
Since $r_j \to 0$, it follows that
\[
| \langle B_j( x,t), Dw_j \rangle| \leq r_j\|B\|_{L^{\infty}(Q_{1};\mathbb{R}^{n})}\|Dw_j\|_{L^\infty(Q_1)}\to 0, \mbox{ as } j \to \infty.
\]
Moreover, since $\gamma \in (0,1)$ and $\displaystyle \lim_{\rho \to 0} \phi_j(\rho) = + \infty,$
\[
|\phi_j(r_j)^{\gamma-1} ( (w_j^+)^\gamma - (w_j^-)^\gamma )| \leq 2\phi_j(r_j)^{\gamma-1}|w_j|^\gamma \to 0,  \mbox{ as } j \to \infty.
\]
Thus, $(w_j) \subset \mathcal{G}$ is bounded, and  there exists $w_0 \in C^0(Q_{1/2})$ such that, up to subsequences, $ w_j \to w_0$, locally uniformly, and by the stability result, Lemma \ref{Est},  $w_0$ solves
\[
\mathrm{tr}(A_0(0,0)D^2 w_0)-\partial_{t}w_0 
= 0
\quad \text{in } \mathbb{R}^n \times (-\infty,0],
\]
where $
(A_0)_{ij}(0,0) := \frac{\partial F}{\partial X_{ij}}(\mathrm{O}_n,0,0),
\ \text{for } 1 \leq i,j \leq n, 
$
 is constant and uniformly elliptic. Thus, this is a linear uniformly parabolic equation with constant coefficients. By the interior estimates for the heat equation \cite[\S 2.3, Theorem 9]{Evans10},  for any parabolic cylinder
$Q_r := B_r \times (-r^2,0],$
we have
\[
\|D^2 w_0\|_{L^\infty(Q_{r/2})}+\|\partial_{t}w_0\|_{L^\infty(Q_{r/2})}\le\frac{C}{r^2} \|w_0\|_{L^\infty(Q_r)},
\]
and
\[
\|D w_0\|_{L^\infty(Q_{r/2})}\le\frac{C}{r} \|w_0\|_{L^\infty(Q_r)}.
\]
Since the limit equation holds in $\mathbb{R}^n \times (-\infty,0]$, we may take $r \to + \infty$. Thus, 
\[
\|D w_0\|_{L^\infty(Q_{r/2})} \to 0,\quad\|D^2 w_0\|_{L^\infty(Q_{r/2})} \to 0, \quad \text{and} \quad \|\partial_{t}w_0\|_{L^\infty(Q_{r/2})} \to 0.
\]
Hence,
\[
D w_0 = 0,\quad D^2 w_0 = 0,\quad \text{and} \quad \partial_{t}w_0 = 0,
\]
and, thus, $w_0$ is constant in space and time. In addition,  $w_0(0,0)=0$, which means that
$
w_0 \equiv 0.
$
However, this contradicts the inequality \eqref{eq:wjnontrivial}.
Therefore, the desired estimate holds.
\end{proof}

To conclude this manuscript, we present a proof of Theorem \ref{Thm1.4}.

\begin{proof}[\bf Proof of Theorem \ref{Thm1.4}]
We treat only the case in which the decay of the positive part of $u$ is assumed; the modifications required for the complementary case are briefly indicated at the end. Without loss of generality, we assume $(x_{0},t_{0})=(0,0)$. Following \cite{CKS00}, we first establish the existence of a constant $\bar{\mathrm{C}}>0$ such that
\begin{equation}\label{4.7}
s_{j+1}\leq \max\{\bar{\mathrm{C}}2^{-\bar{\alpha}(j+1)},2^{-\bar{\alpha}}s_{j}\},
\end{equation}
where
\begin{equation*}
s_{j}=\|u\|_{L^{\infty}(Q_{2^{-j}})}\qquad \text{and}\qquad \bar{\alpha}=\frac{2}{1-\gamma}.
\end{equation*}
To prove \eqref{4.7}, we argue by contradiction. For each $k\in\mathbb{N}$, suppose there exists $j_{k}\in\mathbb{N}$ such that
\begin{equation}\label{4.8}
s_{j_{k}+1}> \max\{k2^{-\bar{\alpha}(j_{k}+1)},2^{-\bar{\alpha}}s_{j_{k}}\}.
\end{equation}
In this setting, define the rescaled profile $u_{k}:Q_{1}\to\mathbb{R}$ by
\begin{equation*}
u_k(x,t) \coloneqq \frac{u(2^{-j_k} x,2^{-2j_{k}}t)}{s_{j_k + 1}}.
\end{equation*}
For $u_{k}$, we have $u_{k}(0,0)=0$ (since $(0,0)\in \Gamma(u)$), $\|u_{k}\|_{L^{\infty}(Q_{1/2})}=1$, and, by \eqref{4.8}, $\|u_{k}\|_{L^{\infty}(Q_{1})}\leq 2^{\bar{\alpha}}$. Moreover, $u_{k}$ satisfies, in the viscosity sense,
\begin{equation*}
F_{k}(D^{2}u_{k},x,t)-\partial_{t}u_{k}+\langle B_{k}(x,t),Du_{k}\rangle=f_{k}(x,t,u_{k})
\qquad \text{in} \qquad Q_{1},
\end{equation*}
where
\begin{eqnarray*}
\left\{
\begin{array}{lll}
F_{k}(\mathrm{M},x,t) &:=& \frac{2^{-2j_{k}}}{s_{j_{k}+1}} F\!\left(\frac{s_{j_{k}+1}}{2^{-2j_{k}}}\mathrm{M},2^{-j_{k}}x,2^{-2j_{k}}t\right), \\
B_{k}(x,t) &:=& 2^{-j_{k}}B(2^{-j_{k}}x,2^{-2j_{k}}t), \\
f_{k}(x,t,u_{k})&:=& \frac{2^{-2j_{k}}}{s_{j_{k}+1}^{1-\gamma}}((u_{k}^{+})^{\gamma}-(u_{k}^{-})^{\gamma}).
\end{array}
\right.
\end{eqnarray*}
Furthermore, $B_{k}\to 0$ as $k\to\infty$, and, using \eqref{4.8} again, we obtain
\begin{equation*}
|f_{k}(x,t,u_{k})|\leq \frac{2^{-2j_{k}}}{s_{j_{k}+1}^{1-\gamma}}2^{1+\gamma\bar{\alpha}}\leq\frac{2^{3+\gamma\bar{\alpha}}}{k^{1-\gamma}} \to 0\qquad \text{as} \qquad k\to \infty.
\end{equation*}
Finally, it is straightforward to verify that $F_{k}$ satisfies the same structural assumptions in {($A_1$)} and that its oscillation is controlled by
\begin{equation}\label{4.9}
\Phi_{F_{k}}(x,t)\leq \left(1+\frac{2^{\bar{\alpha}}}{k}\right)\Phi_{F}(2^{-j_{k}}x,2^{-2j_{k}}t),
\end{equation}
which converges to zero in view of the continuity assumption on the oscillation of the coefficients $(A_{3})$. Therefore, we may apply the stability result (Lemma \ref{Est}) and, up to subsequences, obtain $u_{k}\to u_{\infty}$ and $F_{k}\to F_{\infty}$, where $u_{\infty}$ is a viscosity solution to
\begin{equation}\label{4.10}
F_{\infty}(D^{2}u_{\infty})-\partial_{t}u_{\infty}=0\qquad\text{in}\qquad Q_{3/4},
\end{equation}
and $F_{\infty}$ is an operator satisfying {\rm ($A_1$)} with constant coefficients, in light of \eqref{4.9}. From the properties established above, it follows that $u_{\infty}$ satisfies
\begin{equation}\label{4.11}
u_{\infty}(0,0)=0\qquad \text{and}\qquad
\|u_{\infty}\|_{L^{\infty}(Q_{1/2})}=1.
\end{equation}

Now, for $k\gg 1$ such that $Q_{2^{-j_{k}}}\subset Q_{r_{0}}$, the decay assumption on $u^{+}$ yields
\begin{equation*}
0\leq u_k^{+}(x,t) \leq \frac{\mathrm{C}_{0} 2^{-j_k \bar{\alpha}}}{s_{j_k + 1}}
\leq  \frac{2^{\bar{\alpha}} \mathrm{C}_0}{k},
\end{equation*}
where the last inequality follows from \eqref{4.8}. Passing to the limit, we conclude that $u_{\infty}^{+}=0$, hence $u_{\infty}\leq 0$. Therefore, $(0,0)$ is a nonnegative maximum point of $u_{\infty}$, which solves \eqref{4.10}. Invoking the Strong Maximum Principle (see \cite[Theorem 2.1]{DaL04}), we deduce that $u_{\infty}$ is constant. Since $u_{\infty}(0,0)=0$, it follows that $u_{\infty}\equiv 0$, contradicting \eqref{4.11} and thereby proving \eqref{4.7}.

Finally, to obtain the desired result, let $j_0 \in \mathbb{N}$ be the smallest integer such that $2^{-j_0} \leq r_0$. Then
\begin{equation}\label{4.12}
s_{j_0} \leq \mathrm{C} \, 2^{-\bar{\alpha} j_0},
\end{equation}
where $\mathrm{C} = \max\{\bar{\mathrm{C}},\, 2^{\bar{\alpha}j_0}s_{j_0}\}$. For any $r \in (0,r_0]$, choose $j\geq j_0$ such that
\[
2^{-(j+1)}\leq r\leq 2^{-j}.
\]
Using \eqref{4.12}, we obtain
\begin{equation}\label{4.13}
\sup_{Q_r} u^{-}
\leq \|u\|_{L^{\infty}(Q_r)}+\mathrm{C}_0 r^{\bar{\alpha}}
\leq s_j+\mathrm{C}_0 r^{\bar{\alpha}}
\leq \mathrm{C}\,2^{-\bar{\alpha}j}+\mathrm{C}_0 r^{\bar{\alpha}}
\leq \left(2^{\bar{\alpha}}\bar{\mathrm{C}}+\mathrm{C}_0\right)r^{\bar{\alpha}}.
\end{equation}
Setting $\mathrm{C}_1:=2^{\bar{\alpha}}\mathrm{C}+\mathrm{C}_0$, we conclude that
\[
\sup_{Q_r}u^{-}\leq \mathrm{C}_1 r^{\bar{\alpha}} \quad \text{for every } r\in(0,r_0].
\]

The remaining case is handled by the same contradiction argument. Specifically, assuming the growth condition on $u^{-}$, one constructs a blow-up profile $u_{\infty}$ whose negative part vanishes identically, while $(0,0)$ becomes a non-positive minimum point of a solution to the limit equation \eqref{4.10}. This leads to a contradiction via the Strong Minimum Principle, established in Da Lio \cite{DaL04}. The decay estimate for the positive part then follows by repeating the argument in \eqref{4.13} with the necessary modifications. This completes the proof.

\end{proof}

\subsection*{Acknowledgments}

J. da Silva Bessa has been supported by FAPESP-Brazil under Grant No. 2023/18447-3.
J.V. da Silva has received partial support from CNPq-Brazil under Grant No. 307131/2022-0,  Chamada CNPq/MCTI Nº 10/2023 - Faixa B - Grupos Consolidados under Grant 420014/2023-3, and by  FAPESP-Brazil under the Grant No.  2025/09344-1-Special Programs-Special Projects-First Projects-Call for Proposals (2025)-1st Cycle. 
M. Soares has been supported by CNPq-Brazil under the Grants No. 303.154/2025-0 and  No. 152.786/2025-2.

\end{document}